\theoremstyle{plain}
\newtheorem{theorem}{Theorem}
\newtheorem{lemma}[theorem]{Lemma}
\newtheorem{proposition}[theorem]{Proposition}
\newtheorem{definition}[theorem]{Definition}
\theoremstyle{remark}
\newtheorem{remark}[theorem]{Remark}
\numberwithin{theorem}{section}
\numberwithin{equation}{section}
\numberwithin{figure}{section}
\numberwithin{table}{section}
\newcommand{\N}{\mathbb{N}}
\newcommand{\R}{\mathbb{R}}
\newcommand{\Cleq}{\lesssim}
\newcommand{\Supp}{\mathrm{supp}}
\newcommand{\Domain}{\Omega}
\newcommand{\Dim}{d}
\newcommand{\Normal}{n}
\newcommand{\Leb}[1]{L^2(\ifthenelse{\equal{#1}{}}{\Domain} {#1})}
\newcommand{\LebH}[1]{L^2_0(\ifthenelse{\equal{#1}{}}{\Domain} {#1})}
\newcommand{\Sob}[1]{H^1(\ifthenelse{\equal{#1}{}}{\Domain}{#1})}
\newcommand{\SobH}[1]{H^1_0(\ifthenelse{\equal{#1}{}}{\Domain}{#1})}
\newcommand{\SobD}[1]{H^{-1}(\ifthenelse{\equal{#1}{}}{\Domain}{#1})}
\newcommand{\Hdiv}[1]{H_{\mathrm{div}}(\ifthenelse{\equal{#1}{}}{\Domain}{#1})}
\DeclareMathOperator{\Grad}{\nabla}
\DeclareMathOperator{\GradM}{\Grad_\Mesh}
\DeclareMathOperator{\Div}{div}   
\DeclareMathOperator{\DivM}{\Div_\Mesh}
\DeclareMathOperator{\Divdisc}{\underline{\Div}_\textit{h}}
\DeclareMathOperator{\Lapl}{\Delta}
\DeclareMathOperator{\Curl}{curl}
\newcommand{\SPvel}{\SobH{}^\Dim}
\newcommand{\SPpres}{\LebH{}}
\newcommand{\SPker}{Z}
\newcommand{\SPveldisc}{V_h}
\newcommand{\SPpresdisc}{Q_h}
\newcommand{\SPkerdisc}{Z_h}
\newcommand{\Formpres}{b}
\newcommand{\Formveldisc}{a_h}
\newcommand{\Formpresdisc}{b_h}
\newcommand{\NormSemi}[1]{\left \lvert #1 \right \rvert}          
\newcommand{\Norm}[1]{\| #1 \|}                     
\newcommand{\Normh}[2][h]{\Norm{ #2}_{#1}}
\newcommand{\NormLeb}[2]{\Norm{#1}_{\Leb{#2}}}
\newcommand{\Normtr}[1]{ \lvert\!\lvert\!\lvert{#1} \rvert\!\rvert\!\rvert}
\newcommand{\Mesh}{\mathcal{M}}
\newcommand{\Submesh}{\Mesh}
\newcommand{\Nodes}[1]{\mathcal{V}_{#1}}
\newcommand{\Faces}[1]{\mathcal{F}_{#1}}
\newcommand{\Jump}[1]{\llbracket #1 \rrbracket}
\newcommand{\Refel}{\mathrm{ref}}
\newcommand{\Degree}{\ell}
\newcommand{\Poly}[1]{\mathbb{P}_{#1}}
\newcommand{\Polypiec}[2]{S_{#1}^{#2}}
\newcommand{\Polyavg}[2]{\widehat{S}_{#1}^{#2}}
\newcommand{\Polybnd}[2]{\mathring{S}_{#1}^{#2}}
\newcommand{\CR}[1]{\mathring{CR}_{#1}}
\newcommand{\Pl}{{ub}}
\newcommand{\Cr}{{cr}}
\newcommand{\Ht}{{ht}}
\newcommand{\Smt}{E_h}
\newcommand{\Smtavg}{A_h}
\newcommand{\Smtavgloc}{A}
\newcommand{\Smtfac}{B_h}
\newcommand{\Smtm}{M_h}
\newcommand{\SmtLag}{I}
\newcommand{\Id}{\mathrm{Id}}
\newcommand{\Ritz}{\Pi}
\newcommand{\Rightinv}{R}
\newcommand{\Cqo}{C_{\mathrm{qo}}}
\newcommand{\Cqopr}{\mathrm{C_{\mathrm{qopr}}}}
\newcommand{\Visc}{\mu}
\title[Quasi-optimal and pressure robust discretizations of Stokes]{Quasi-optimal and pressure robust\\ discretizations of the Stokes equations by\\new augmented Lagrangian formulations}
\author[C.~Kreuzer]{Christian Kreuzer}
\address{TU Dortmund \\ Fakult{\"a}t f{\"u}r Mathematik \\ D-44221 Dortmund \\ Germany}
\email{christian.kreuzer@tu-dortmund.de}
\author[P.~Zanotti]{Pietro Zanotti}
\address{TU Dortmund \\ Fakult{\"a}t f{\"u}r Mathematik \\ D-44221 Dortmund \\ Germany}
\email{zanottipie@gmail.com}
\begin{document}

\begin{abstract}
We approximate the solution of the stationary Stokes equations with
various conforming and nonconforming inf-sup stable pairs of finite
element spaces on simplicial meshes. Based on each pair, we design a
discretization that is quasi-optimal and pressure robust, in the sense
that the velocity $H^1$-error is proportional to the best $H^1$-error
to the analytical velocity. This shows that such a
property can be achieved without using conforming and divergence-free
pairs. We bound also the pressure $L^2$-error, only in terms of the
best approximation errors to the analytical velocity and the
analytical pressure. Our construction can be summarized as follows. First, a linear operator acts on discrete velocity test
functions, before the application of the load functional, and maps the
discrete kernel into the analytical one. Second, in order to enforce consistency, we employ a new augmented Lagrangian formulation, inspired by Discontinuous Galerkin methods.   
\end{abstract}
\maketitle

\section{Introduction}
\label{S:introduction}

We consider the discretization of the stationary Stokes equations
\begin{equation}
\label{Stokes-strong}
-\Visc \Lapl u + \Grad p = f
\quad \text{and} \quad 
\Div u = 0
\quad \text{in } \Domain,
\qquad
u = 0 
\quad \text{on } \partial \Domain 
\end{equation}
with viscosity $\Visc > 0$, in a bounded domain $\Domain \subseteq
\R^\Dim$, $\Dim \in \{2,3\}$. According to the classical approach of
Brezzi~\cite{Brezzi:74}, we approximate the analytical velocity $u$
and the analytical pressure $p$ by means of discrete spaces
$\SPveldisc$ and $\SPpresdisc$, which are required to fulfill the so-called inf-sup condition. We additionally assume that $\SPveldisc$
and $\SPpresdisc$ are finite element spaces on a simplicial mesh of
$\Domain$. 

To motivate our work, let us focus on the velocity $H^1$-error, i.e. the error between $u$ and the discrete velocity $u_h$, measured in the
$H^1$-norm.  We refer to~\cite[Chapter~5]{Boffi:Brezzi:Fortin.13} for the proof of the results listed hereafter. The C\'{e}a's-type quasi-optimal estimate    
\begin{equation}
\label{intro:div-free-est}
\NormLeb{\Grad(u-u_h)}{}
\leq c
\inf_{w_h \in \SPveldisc} \NormLeb{\Grad(u-w_h)}{}
\end{equation}
is well-known for standard discretizations (see~\eqref{Stokes-disc} and~\eqref{conforing-div-free-disc} below) with conforming and
divergence-free pairs, i.e. under the assumptions $\SPveldisc
\subseteq \SPvel$ and $\Div \SPveldisc = \SPpresdisc$. Such pairs have attracted a growing interest in recent years; see 
\cite{Guzman.Neilan:14,Guzman.Neilan:18,Scott.Vogelius:85,Zhang:07}
and the references therein. Owing to~\eqref{intro:div-free-est}, this class of discretizations seems particularly attractive, because it fully exploits, up to a constant, the approximation properties of the space $\SPveldisc$ in the $H^1$-norm. This prevents, in particular, from the following issues.  

For standard discretizations with general conforming pairs (see~\eqref{Stokes-disc} and~\eqref{conforing-disc} below) one typically has
\begin{equation}
\label{intro:conforming-est}
\NormLeb{\Grad(u-u_h)}{}
\leq c \left( 
\inf_{w_h \in \SPveldisc} \NormLeb{\Grad(u-w_h)}{}
+\dfrac{1}{\Visc}
\inf_{q_h \in \SPpresdisc} \NormLeb{p-q_h}{}
\right).
\end{equation}
Thus, if $\Div \SPveldisc \neq \SPpresdisc$, the right-hand side suggests that the velocity $H^1$-error may be not robust with respect to the pressure. This is indeed the case and such effect is known in the literature as poor mass conservation. It becomes extreme for purely irrotational loads or for small values of the viscosity; see, for instance,~\cite{Linke:14}. Poor mass conservation discourages, in particular, from the use of unbalanced pairs, i.e. pairs $\SPveldisc/\SPpresdisc$ so that the approximation power of $\SPveldisc$ in the $H^1$-norm is higher than the one of $\SPpresdisc$ in the $L^2$-norm; cf. Remark~\ref{R:unbalanced-pairs}. 

Recall also that, in the nonconforming case $\SPveldisc \nsubseteq \SobH{\Domain}^\Dim$, estimates in the form
\begin{equation}
\label{intro:nonconforming-est}
\Normh{u-u_h}
\leq c\left( 
\inf_{w_h \in \SPveldisc} \Normh{u-w_h}
+ \dfrac{1}{\Visc}
\inf_{q_h \in \SPpresdisc} \NormLeb{p-q_h}{}
+
\Normtr{(u,p)}_h
\right) 
\end{equation}
are often derived. Here $\Normh{\cdot}$ is an extension of the
$H^1$-norm to $\SobH{\Domain}^\Dim + \SPveldisc$ and the semi-norm
$\Normtr{\cdot}_h$ is defined on (a subspace of) $\SobH{\Domain}^\Dim
\times \Leb{\Domain}$. Since the lack of smoothness in $\SPveldisc$ is
commonly compensated by additional regularity of the load beyond
$\SobD{\Domain}^\Dim$, the semi-norm $\Normtr{\cdot}_h$ cannot be
extended to $\SobH{\Domain}^\Dim \times \LebH{\Domain}$ and
potentially dominates the right-hand side of
\eqref{intro:nonconforming-est} for rough solutions. Therefore, an
estimate like~\eqref{intro:conforming-est} cannot be expected to hold,
cf. Remark~\ref{R:smoothing-E}.  

Several techniques are available in the literature to deal with the
above mentioned difficulties. The discretization of
\cite[section~6]{Badia.Codina.Gudi.Guzman:14} and the general
framework in~\cite{Veeser.Zanotti:18} indicate how to avoid the issue
with $\Normtr{\cdot}_h$ for nonconforming pairs. The over-penalized
augmented Lagrangian formulation of~\cite{Boffi.Lovadina:97} and the
grad-div stabilization~\cite{Olshanskii.Reusken:04} may serve to
mitigate the impact of poor mass conservation. More recently, Linke et
al.~\cite{Lederer.Linke.Merdon.Schoberl:17,Linke:14,Linke.Matthies.Tobiska:16}
proposed a class of discretizations, which differ from standard ones
only in the treatment of the load and enjoy the following pressure
robust upper bound 
\begin{equation}
\label{intro-Linke-est}
\Normh{u-u_h}
\leq c \left(
\inf_{w_h \in \SPveldisc} \Normh{u-w_h} + \Normtr{(u, 0)}_h
\right) 
\end{equation}
for several conforming and nonconforming pairs.

In this paper, we show that the quasi-optimal and
pressure robust estimate~\eqref{intro:div-free-est} is not a
prerogative of conforming and divergence-free pairs, but can be
achieved also by (carefully designed) discretizations, based on
general inf-sup stable pairs. In this way, we combine the advantages
of the various techniques listed above. We also bound the pressure 
$L^2$-error only in terms of the best approximation errors to the
analytical velocity and to the analytical pressure. To our best
knowledge, similar error bounds were previously obtained only in
\cite{Verfuerth.Zanotti:18} in the rather specific case of the
lowest-order nonconforming Crouzeix-Raviart pair
\cite{Crouzeix.Raviart:73}. In particular, our results make unbalanced
pairs a valuable option, if one is more interested in the analytical
velocity rather than in the analytical pressure. 

Our approach is guided by few simple necessary conditions and builds on two main ingredients. First, we discretise the load with
the help of an operator which maps $\SPveldisc$ into $\SPvel$ and
discretely divergence-free into exactly divergence-free functions. The
importance of the latter property was first devised in
\cite{Linke:14}. For this purpose, we solve local Stokes problems with
Scott-Vogelius elements on a barycentric refinement of the mesh, see
\cite{Guzman.Neilan:18,Qin:1994,Zhang:05}. Second, we discretise the weak form of the Laplace operator in a way inspired by Discontinuous Galerkin (DG) methods, in order to enforce the necessary consistency. The resulting discretization can be interpreted as a new augmented Lagrangian formulation, cf. Remark~\ref{R:connection-augmented-lagrangian}.

The rest of the paper is organized as follows. In
section~\ref{S:abstract-framework} we set up the abstract framework. In section~\ref{S:paradigmatic-discretization} we illustrate our
construction by means of a model example. Various generalizations are then discussed in section~\ref{S:generalizations}. Finally, in section~\ref{S:numerics} we complement our theoretical findings through some numerical experiments.

\section{Abstract framework}
\label{S:abstract-framework}

This section introduces an abstract discretization of
\eqref{Stokes-strong} and the properties in which we are
interested. Two basic results are also proved. We use standard
notations for Lebesgue and Sobolev spaces. 

\subsection{Quasi-optimal discretizations}
\label{SS:quasi-optimality}

Let $\Domain \subseteq \R^\Dim$, $\Dim \in \{2,3\}$, be an open and
bounded polytopic domain with Lipschitz-continuous boundary. The weak
formulation of the stationary Stokes equations in $\Domain$, with
viscosity $\Visc > 0$ and load $f \in \SobD{}^\Dim$, looks for $u \in
\SPvel$ and $p \in \SPpres$ 
such that 
\begin{equation}
\label{Stokes-weak}
\begin{alignedat}{2}
&\forall v \in \SPvel&
\qquad
\Visc \int_\Domain \Grad u \colon \Grad v 
- \int_\Domain p \Div v 
&= \left\langle  f , v \right\rangle  \\
&\forall q \in \SPpres&
\qquad
\int_\Domain q \Div u &= 0 .
\end{alignedat}
\end{equation}
Here $\colon$ denotes the euclidean scalar product of $\Dim \times
\Dim$ tensors and $\left\langle \cdot, \cdot \right\rangle $ is the
dual pairing of $\SobD{}^\Dim$ and $\SPvel$. Due to the boundary
condition on the analytical velocity $u$, the analytical pressure $p$
belongs to $\SPpres := \{ q \in \Leb{} \mid \int_\Domain q = 0
\}$. Problem~\eqref{Stokes-weak} is uniquely solvable, according to
\cite[Theorem~8.2.1]{Boffi:Brezzi:Fortin.13}. 

\begin{remark}[Alternative formulation]
\label{R:alternative-formulation}
Most of our subsequent results remain unchanged in case the gradient
is replaced by the symmetric gradient in the first equation of
\eqref{Stokes-weak} and the homogeneous Neumann condition is imposed
on (a portion of) $\partial \Domain$. The only remarkable difference
is that a piecewise Korn's inequality may fail to hold for some of the
nonconforming pairs mentioned in section~\ref{SS:nonconforming-pairs},
see~\cite{Arnold:93,Brenner:04}. This problem, however, can be
overcome e.g. by an additional jump penalization in the spirit of
\cite[Section~3.3]{Veeser.Zanotti:18b}. 
\end{remark}

We consider discretizations that mimic the variational structure
of problem~\eqref{Stokes-weak}. More precisely, we approximate $u$ and
$p$ in finite-dimensional linear spaces $\SPveldisc$ and
$\SPpresdisc$. We require $\SPpresdisc \subseteq \SPpres$ and measure
the pressure error in the $L^2$-norm $\NormLeb{\cdot}{}$. Instead, we
allow for nonconforming discrete velocity spaces $\SPveldisc
\nsubseteq \SPvel$. In order to measure the velocity error, we assume
that an extension $\Normh{\cdot}$ of the $H^1$-norm $\NormLeb{\Grad
  \cdot}{}$ to $\SPvel + \SPveldisc$ is at our disposal. We replace the 
bilinear forms in~\eqref{Stokes-weak} with discrete surrogates
$\Formveldisc: \SPveldisc \times \SPveldisc \to \R$ and $\Formpresdisc
: \SPveldisc \times \SPpresdisc \to \R$. Moreover, we let $\Smt:
\SPveldisc \to \SPvel$ be a linear operator. Hence, we look for a discrete velocity $u_h \in \SPveldisc$ and a discrete
pressure $p_h \in \SPpresdisc$ such that 
\begin{equation}
\label{Stokes-disc}
\begin{alignedat}{2}
&\forall v_h \in \SPveldisc
&\qquad
\Visc \,\Formveldisc(u_h, v_h)
+ \Formpresdisc(v_h, p_h) 
&= \left\langle  f , \Smt v_h \right\rangle  \\
&\forall q_h \in \SPpresdisc
&\qquad
\Formpresdisc(u_h, q_h) &= 0 .
\end{alignedat}
\end{equation}

To ensure that this problem is uniquely solvable, we assume hereafter that
$\Formveldisc$ is coercive on $\SPveldisc$ and that the pair
$\SPveldisc / \SPpresdisc$ is inf-sup stable, i.e.  
\begin{equation}
\label{inf-sup-disc}
\forall q_h \in \SPpresdisc 
\qquad
\beta \NormLeb{q_h}{} \leq
\sup_{v_h \in \SPveldisc} \dfrac{\Formpresdisc(v_h, q_h)}{\Normh{v_h}}
\end{equation}
for some constant $\beta > 0$, see
\cite[Corollary~4.2.1]{Boffi:Brezzi:Fortin.13}. Note, in particular,
that the duality $\left\langle  f , \Smt v_h \right\rangle$ is
well-defined for all $f \in \SobD{}^\Dim$ and $v_h \in \SPveldisc$,
also in the nonconforming case. 

We shall pay special attention to the following property, which
guarantees that $(u_h, p_h)$ is a near-best approximation of $(u,p)$
in $\SPveldisc \times \SPpresdisc$. 

\begin{definition}[Quasi-optimality]
\label{D:quasi-optimal}
Denote by $(u,p)$ and $(u_h,p_h)$ the solutions of~\eqref{Stokes-weak}
and~\eqref{Stokes-disc}, respectively, with load $f$ and viscosity
$\Visc$. We say that~\eqref{Stokes-disc} is a quasi-optimal
discretization of~\eqref{Stokes-weak} when there is a constant $C \geq
1$ such that 
\begin{equation}
\label{quasi-optimality}
\Visc \Normh{u-u_h}
+
\NormLeb{p-p_h}{}
\leq C \left( 
\Visc  \inf_{w_h \in \SPveldisc} \Normh{u-w_h} +
\inf_{q_h \in \SPpresdisc} \NormLeb{p-q_h}{}
\right) 
\end{equation}
for all $f \in \SobD{}^\Dim$ and $\Visc > 0$. We denote by $\Cqo$ the smallest such constant.
\end{definition}

According to~\cite[Theorem~5.2.5]{Boffi:Brezzi:Fortin.13}, the
discretization~\eqref{Stokes-disc} is quasi-optimal if 
\begin{equation}
\begin{gathered}
\label{conforing-disc}
\SPveldisc \subseteq \SPvel \qquad \Smt = \Id_{\SPveldisc}\\
\Formveldisc(w_h, v_h) = \int_\Domain \Grad w_h \colon \Grad v_h
\qquad
\Formpresdisc(v_h, q_h) = -\int_\Domain q_h \Div v_h
\end{gathered}
\end{equation}
i.e. if $\SPveldisc/\SPpresdisc $ is a conforming pair and
$\Formveldisc$, $\Formpresdisc$ and $\Smt$ are simple restrictions of
their conforming counterparts in~\eqref{Stokes-weak}. In
sections~\ref{S:paradigmatic-discretization} and
\ref{S:generalizations} we show that quasi-optimality can be achieved
also with nonconforming pairs and/or for different choices of
$\Formveldisc$ and $\Smt$.  

\begin{remark}[Smoothing by $\Smt$]
\label{R:smoothing-E}
Since $\SPveldisc$ is finite-dimensional, the operator $\Smt$ is
bounded and the solution of~\eqref{Stokes-disc} depends continuously
on the $H^{-1}$-norm of $f$. This property, in turn, prevents the
issue pointed out in the introduction concerning the semi-norm
$\Normtr{\cdot}_h$ in~\eqref{intro:nonconforming-est}. Of course, such
observation is of practical interest only if the norm of $\Smt$ is of
moderate size, so that it does not affect too much the stability
constant of~\eqref{Stokes-disc}. We call $\Smt$ "smoothing" operator,
because it increases the smoothness of the elements of $\SPveldisc$
whenever $\SPveldisc \nsubseteq \SPvel$. For conforming pairs, one can
let $\Smt$ be the identity as in~\eqref{conforing-disc}. This choice
is compatible with quasi-optimality but, possibly, it is not pressure
robust; compare with section~\ref{SS:quasi-optimality-press-robustness} below.

\end{remark}

\begin{remark}[Computational feasibility]
\label{R:computational-feasibility}
It is highly desirable that there are bases $\{ \varphi_1, \dots,
\varphi_N \}$ and $\{ \psi_1, \dots, \psi_M \}$ of $\SPveldisc$ and
$\SPpresdisc$, respectively, such that the scalars 
\begin{equation*}
\label{computational-feasibility}
\Formveldisc(\varphi_i, \varphi_j) 
\qquad
\Formpres(\varphi_i, \psi_k)
\qquad
\left\langle f, \Smt \varphi_i \right\rangle 
\end{equation*}
can be computed or approximated, up to a prescribed tolerance, with $O(1)$ operations, for all $i,j =
1,\dots, N$ and $k=1,\dots, M$. This "computational feasibility" is not necessary for quasi-optimality but guarantees that the solution of~\eqref{Stokes-disc} can be computed with optimal complexity. 
\end{remark}

\subsection{Quasi-optimal and pressure robust discretizations}
\label{SS:quasi-optimality-press-robustness}

The analytical velocity $u$ solving~\eqref{Stokes-weak} can be
equivalently characterized as the solution of an elliptic problem. In
fact, the second equation imposes that $u$ is divergence-free or, in
other words, that it is an element of the kernel 
\begin{equation*}
\label{kernel}
\SPker :=
\{ z \in \SPvel \mid \Div z = 0 \}.
\end{equation*} 
Then, testing the first equation with an arbitrary element of
$\SPker$, we obtain the reduced problem 
\begin{equation}
\label{Stokes-reduced}
\forall z \in \SPker \qquad
\Visc \int_\Domain \Grad u \colon \Grad z 
= \left\langle f ,z \right\rangle 
\end{equation}
which is uniquely solvable, according to the Lax-Milgram lemma and the 
Friedrichs inequality. 

The same structure can be observed at the discrete level. To see this,
we first introduce the discrete divergence $\Divdisc: \SPveldisc \to
\SPpresdisc$ by
\begin{equation}
\label{divergence-disc}
\forall q_h \in \SPpresdisc \qquad
\int_\Domain q_h \Divdisc v_h
= - \Formpresdisc(v_h, q_h)
\end{equation}
for all $v_h \in \SPveldisc$. The second equation of~\eqref{Stokes-disc} imposes that $u_h$ is discretely divergence-free, i.e. it is an element of the discrete kernel
\begin{equation*}
\label{kernel-discrete}
\SPkerdisc :=
\{ z_h \in \SPveldisc \mid \Divdisc z_h = 0 \}.
\end{equation*}
Then, testing the first equation with an arbitrary element of $\SPkerdisc$, we derive the discrete reduced problem
\begin{equation}
\label{Stokes-reduced-disc}
\forall z_h \in \SPkerdisc \qquad
\Visc\,  \Formveldisc(u_h, z_h) 
= \left\langle f, \Smt z_h \right\rangle 
\end{equation}
which is uniquely solvable, since $\Formveldisc$ is coercive on
$\SPveldisc$. In the vein of~\cite[Remark~2.1]{Brezzi:74}, it is worth
recalling that this is a (possibly) nonconforming discretization of
\eqref{Stokes-reduced}, because $\SPkerdisc$ may fail to be a subspace of $\SPker$, even if $\SPveldisc \subseteq \SPvel$. 

Similarly as in Definition~\ref{D:quasi-optimal}, we will be interested in
the question whether $u_h$ is a near-best approximation of $u$ in
$\SPkerdisc$. This actually amounts to ask whether $u_h$ is near-best
in $\SPveldisc$, because the inf-sup condition~\eqref{inf-sup-disc}
implies 
\begin{equation}
\label{best-errors-Vh-Zh}
\inf_{z_h \in \SPkerdisc} \Normh{u-z_h}
\leq \left( 1 + \beta^{-1} \right) 
\inf_{w_h \in \SPveldisc} \Normh{u-w_h} 
\end{equation} 
according to~\cite[Proposition~5.1.3]{Boffi:Brezzi:Fortin.13} and~\cite[Lemma~2.1]{Pyo.Nochetto:05}.

\begin{definition}[Quasi-optimality and pressure robustness]
\label{D:quasi-optimality-press-robust}
Denote by $u$ and $u_h$ the solutions of~\eqref{Stokes-reduced} and
\eqref{Stokes-reduced-disc}, respectively, with load $f$ and viscosity
$\Visc$. We say that~\eqref{Stokes-disc} is a quasi-optimal and
pressure robust discretization of~\eqref{Stokes-weak} when there is a
constant $C \geq 1$ such that 
\begin{equation}
\label{quasi-optimal-press-robust}
\Normh{u-u_h}
\leq C
\inf_{w_h \in \SPveldisc} \Normh{u-w_h}
\end{equation}
for all $f \in \SobD{}^\Dim$ and $\Visc > 0$. We denote by $\Cqopr$ the smallest such constant.
\end{definition}

Problem~\eqref{Stokes-reduced} reveals that the analytical velocity
$u$ is independent of the pressure $p$ and depends on the load $f$
only through its restriction to $\SPker$. This implies, for instance,
that $u$ is invariant with respect to irrotational perturbations of
$f$, see Linke~\cite{Linke:14}. The near-best estimate
\eqref{quasi-optimal-press-robust} guarantees that $u_h$ reproduces
such invariance property at the discrete level and justifies the
designation "pressure robust".  

The discretization~\eqref{Stokes-disc} is known to be quasi-optimal and pressure robust if 
\begin{equation}
\begin{gathered}
\label{conforing-div-free-disc}
\SPveldisc \subseteq \SPvel \qquad
\Div \SPveldisc = \SPpresdisc \qquad
\Smt = \Id_{\SPveldisc}\\
\Formveldisc(w_h, v_h) = \int_\Domain \Grad w_h \colon \Grad v_h
\qquad
\Formpresdisc(v_h, q_h) = -\int_\Domain q_h \Div v_h
\end{gathered}
\end{equation}
i.e. if $\SPveldisc/ \SPpresdisc$ is a conforming and divergence-free
pair and $\Formveldisc$, $\Formpresdisc$ and $\Smt$ are simple
restrictions of their continuous counterparts in
\eqref{Stokes-weak}. In fact, in this case, we have $\SPkerdisc
\subseteq \SPker$ and~\eqref{Stokes-reduced-disc} is a conforming
Galerkin discretization of~\eqref{Stokes-reduced}. Therefore,
C\'{e}a's lemma and~\eqref{best-errors-Vh-Zh} imply $\Cqopr \leq (1+
\beta^{-1})$. It is our purpose to show that quasi-optimality and
pressure robustness can be achieved also by other discretizations than~\eqref{conforing-div-free-disc}.  

\subsection{Necessary consistency conditions}
\label{SS:necessary-consistency}

The left- and the right-hand sides of \eqref{quasi-optimality} are
seminorms on $\SPker \times \SPpres$ and the kernel of the latter is
$(\SPker \cap \SPkerdisc) \times \SPpresdisc$, as a consequence of
\eqref{best-errors-Vh-Zh}. Quasi-optimality actually prescribes that such seminorms are equivalent, because the converse of
\eqref{quasi-optimality} immediately follows from the inclusion $(u_h, p_h) \in \SPkerdisc
\times \SPpresdisc$. Hence, a simple necessary condition is that the
kernels of the two seminorms coincide. In other words, whenever the solution $(u,p)$ of \eqref{Stokes-weak} is in $\SPkerdisc \times \SPpresdisc$, it must solve also \eqref{Stokes-disc}. This is an algebraic
consistency condition, which can be rephrased in terms of the forms
$\Formveldisc$ and $\Formpresdisc$ and of the operator $\Smt$, in the
spirit of~\cite[Definition~2.7]{Veeser.Zanotti:18}. 

\begin{lemma}[Consistency for quasi-optimality]
\label{L:quasi-optimal-nec} 
Assume that~\eqref{Stokes-disc} is a quasi-optimal discretization of
\eqref{Stokes-weak}. Then, necessarily we have
\begin{subequations}
\label{quasi-optimal-nec}
\begin{alignat}{2}
\label{quasi-optimal-nec-div}
&\forall v_h \in \SPveldisc, \, p \in \SPpresdisc &\qquad
&\int_\Domain p ( \Divdisc v_h - \Div \Smt v_h ) = 0
\intertext{and}\label{quasi-optimal-nec-lapl}
&\forall u \in \SPker \cap \SPkerdisc, \, v_h \in \SPveldisc &\qquad&
\Formveldisc(u, v_h) = \int_\Domain \Grad u \colon \Grad \Smt v_h.
\end{alignat}
\end{subequations}
\end{lemma}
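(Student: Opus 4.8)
The plan is to exploit the fact that quasi-optimality forces the two seminorms in~\eqref{quasi-optimality} to have the same kernel, and then to extract the two algebraic identities by testing this kernel coincidence against carefully chosen analytical solutions $(u,p)$ that happen to lie in $\SPkerdisc \times \SPpresdisc$. The key observation, already noted in the excerpt, is that whenever $(u,p) \in \SPkerdisc \times \SPpresdisc$ solves~\eqref{Stokes-weak}, quasi-optimality~\eqref{quasi-optimality} forces the right-hand side to vanish (since $u$ is its own best approximation in $\SPveldisc$ and $p$ its own best approximation in $\SPpresdisc$), hence $u=u_h$ and $p=p_h$; that is, $(u,p)$ must also solve~\eqref{Stokes-disc}. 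I would turn this reproduction property into the two stated identities.

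For~\eqref{quasi-optimal-nec-div}, first I would establish that $\SPker \cap \SPkerdisc \neq \{0\}$ is not needed; instead I would engineer solutions of~\eqref{Stokes-weak} whose pressure realizes an arbitrary $p \in \SPpresdisc$ while the velocity is trivial. Concretely, taking $f = \Grad p$ (in the distributional sense) with $p \in \SPpresdisc \subseteq \SPpres$ yields the weak solution $u = 0$, $p$, since the pair $(0,p)$ satisfies~\eqref{Stokes-weak}: the first equation reads $-\int_\Domain p \Div v = \langle \Grad p, v\rangle$ for all $v \in \SPvel$, which is integration by parts, and the second is trivial. This solution lies in $\SPkerdisc \times \SPpresdisc$ (as $u=0 \in \SPkerdisc$), so by the reproduction property it solves~\eqref{Stokes-disc} with $u_h=0$, $p_h=p$. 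Substituting into the first equation of~\eqref{Stokes-disc} gives $\Formpresdisc(v_h,p) = \langle \Grad p, \Smt v_h\rangle$ for all $v_h \in \SPveldisc$. Rewriting the left side via~\eqref{divergence-disc} as $-\int_\Domain p \Divdisc v_h$ and the right side, using $\Smt v_h \in \SPvel$ and integration by parts, as $-\int_\Domain p \Div \Smt v_h$, yields exactly~\eqref{quasi-optimal-nec-div}.

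For~\eqref{quasi-optimal-nec-lapl}, I would take an arbitrary $u \in \SPker \cap \SPkerdisc$ and realize it as the velocity of a Stokes solution. Setting $f = -\Visc \Lapl u$ (distributionally) and $p=0$, the pair $(u,0)$ solves~\eqref{Stokes-weak}: the second equation holds because $u \in \SPker$ is divergence-free, and the first reduces to $\Visc\int_\Domain \Grad u \colon \Grad v = \langle f,v\rangle$ by definition of $f$. Since $u \in \SPkerdisc$ and $0 \in \SPpresdisc$, this solution again lies in the kernel space, so it solves~\eqref{Stokes-disc} with $u_h=u$, $p_h=0$. The first discrete equation then reads $\Visc\,\Formveldisc(u,v_h) = \langle f, \Smt v_h\rangle = \Visc \int_\Domain \Grad u \colon \Grad \Smt v_h$ for all $v_h \in \SPveldisc$, where the last step integrates by parts using $\Smt v_h \in \SPvel$; dividing by $\Visc$ gives~\eqref{quasi-optimal-nec-lapl}.

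The step I expect to be the main obstacle is making the duality pairings rigorous when $f$ is defined distributionally as $\Grad p$ or $-\Visc\Lapl u$ for a merely piecewise-polynomial $p$ or $u$. One must check that these define bona fide elements of $\SobD{}^\Dim$ and that the pairing $\langle f, \Smt v_h\rangle$ is correctly evaluated through integration by parts against $\Smt v_h \in \SPvel$; the conformity $\Smt v_h \in \SPvel$ is precisely what legitimizes moving derivatives onto $\Smt v_h$ without boundary terms. A secondary subtlety is confirming that the reproduction property really does follow from~\eqref{quasi-optimality} with $C$ finite: one needs the right-hand infima to vanish, which is immediate since the candidates $w_h=u$ and $q_h=p$ are admissible, and that uniqueness of~\eqref{Stokes-disc} then forces $(u_h,p_h)=(u,p)$. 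Once these points are settled, the two identities drop out by matching the discrete equations against the reproduced solutions.
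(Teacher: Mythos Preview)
Your proposal is correct and follows essentially the same approach as the paper: choose analytical solutions $(u,p)=(0,p)$ with $p\in\SPpresdisc$ and $(u,p)=(u,0)$ with $u\in\SPker\cap\SPkerdisc$, invoke quasi-optimality to force $(u_h,p_h)=(u,p)$, and read off the identities from the first discrete equation. The paper's proof is terser---it simply posits the solutions without writing the corresponding loads $f=\Grad p$ and $f=-\Visc\Lapl u$ explicitly---but the substance is identical, and the ``obstacles'' you flag (well-definedness of the distributional loads and the reproduction argument) are straightforward and pose no real difficulty.
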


\begin{proof}
Denote by $(u,p)$ the solution of~\eqref{Stokes-weak} and assume first $u=0$ and $p \in \SPpresdisc$. Quasi-optimality
implies that the solution $(u_h, p_h)$ of~\eqref{Stokes-disc}
satisfies $u_h = 0$ and $p_h = p$. Comparing the first equations of
\eqref{Stokes-weak} and~\eqref{Stokes-disc}, we derive the identity
$\Formpresdisc(v_h, p) = -\int_\Domain p \Div \Smt v_h $ for all $v_h
\in \SPveldisc$. Condition~\eqref{quasi-optimal-nec-div} then follows
from the definition of $\Divdisc$ in~\eqref{divergence-disc}. Next,
assume $u \in \SPker \cap \SPkerdisc$ and $p=0$. Since
quasi-optimality implies $u_h = u$ and $p_h = 0$, condition
\eqref{quasi-optimal-nec-lapl} can be derived comparing the first
equations of~\eqref{Stokes-weak} and~\eqref{Stokes-disc} as before.    
\end{proof}

The conforming discretization~\eqref{conforing-disc} is a simple
option to fulfill~\eqref{quasi-optimal-nec}, but not the only
possible.
Examples with nonconforming discrete velocity space can be found
in~\cite[Section~6]{Badia.Codina.Gudi.Guzman:14} and
\cite{Verfuerth.Zanotti:18}. Standard nonconforming discretizations,
like the one of Crouzeix and Raviart~\cite{Crouzeix.Raviart:73}, do
not fulfill~\eqref{quasi-optimal-nec}, because they do not
employ a smoothing operator. It is also worth noticing
that~\eqref{quasi-optimal-nec} involves the interplay of
$\Formveldisc$ and $\Formpresdisc$ with $\Smt$. This indicates that the discretization of the differential operator in~\eqref{Stokes-strong} and the one of the corresponding load should not be regarded as independent tasks.

Proceeding similarly as in Lemma~\ref{L:quasi-optimal-nec}, we derive
necessary conditions for quasi-optimality and pressure robustness. 

\begin{lemma}[Consistency for quasi-optimality and pressure robustness]
\label{L:quasi-optimal-press-robust-nec}
Assume that~\eqref{Stokes-disc} is a quasi-optimal and pressure robust
discretization of~\eqref{Stokes-weak}. Then, necessarily we have 
\begin{subequations}
	\label{quasi-optimal-press-robust-nec}
	\begin{equation}
	\label{quasi-optimal-press-robust-nec-div}
	\Smt (\SPkerdisc) \subseteq \SPker
	\end{equation}
	and
	\begin{equation}
	\label{quasi-optimal-press-robust-nec-lapl}
	\forall u \in \SPker \cap \SPkerdisc, \, z_h \in \SPkerdisc \qquad
	\Formveldisc(u, z_h) = \int_\Domain \Grad u \colon \Grad \Smt z_h.
	\end{equation}
\end{subequations}
\end{lemma}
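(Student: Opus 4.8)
The plan is to mimic the proof of Lemma~\ref{L:quasi-optimal-nec}, exploiting the fact that pressure robustness forces exact recovery of the discrete velocity in two carefully chosen special cases. Throughout I work with the reduced problems~\eqref{Stokes-reduced} and~\eqref{Stokes-reduced-disc}, since quasi-optimality and pressure robustness is defined in terms of these; the key leverage is that~\eqref{quasi-optimal-press-robust} is a seminorm equivalence whose kernel must be respected, so that whenever the analytical velocity $u$ happens to lie in $\SPkerdisc$, the discrete velocity $u_h$ must coincide with it.

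First I would establish~\eqref{quasi-optimal-press-robust-nec-div}. Fix an arbitrary $z_h \in \SPkerdisc$ and let $u := \Smt z_h \in \SPvel$. The idea is to choose the load $f \in \SobD{}^\Dim$ so that $u$ is the solution of~\eqref{Stokes-reduced}; concretely, set $\langle f, v\rangle := \Visc \int_\Domain \Grad u \colon \Grad v$ for $v \in \SPvel$, which defines an admissible load. For this $f$ the analytical velocity solving~\eqref{Stokes-reduced} is exactly $u = \Smt z_h$. I would then argue that the discrete solution of~\eqref{Stokes-reduced-disc} is $z_h$ itself: testing~\eqref{Stokes-reduced-disc} with this load and using~\eqref{quasi-optimal-press-robust-nec-lapl} (or, to avoid circularity, using coercivity together with the near-best estimate) pins down $u_h = z_h$. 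Since pressure robustness gives the bound $\Normh{u - u_h} \leq \Cqopr \inf_{w_h} \Normh{u - w_h}$, and one can make the infimum vanish by a suitable choice forcing $u \in \SPveldisc$, it follows that $\Smt z_h = u \in \SPker$, i.e. $\Div \Smt z_h = 0$. As $z_h$ was arbitrary this yields $\Smt(\SPkerdisc) \subseteq \SPker$.

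For~\eqref{quasi-optimal-press-robust-nec-lapl} I would take $u \in \SPker \cap \SPkerdisc$, which by assumption is simultaneously an admissible analytical velocity and an element of the discrete kernel. Choosing the load $\langle f, v\rangle := \Visc \int_\Domain \Grad u \colon \Grad v$ again makes $u$ solve~\eqref{Stokes-reduced}. Because $u \in \SPkerdisc$, the best-approximation infimum in~\eqref{quasi-optimal-press-robust} vanishes, and pressure robustness forces $u_h = u$. Substituting $u_h = u$ into~\eqref{Stokes-reduced-disc} and testing against an arbitrary $z_h \in \SPkerdisc$ gives $\Visc \Formveldisc(u, z_h) = \langle f, \Smt z_h\rangle = \Visc \int_\Domain \Grad u \colon \Grad \Smt z_h$, where the last equality uses the definition of $f$. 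Dividing by $\Visc > 0$ yields the claimed identity.

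The main obstacle is the bookkeeping around the reduced formulation: unlike Lemma~\ref{L:quasi-optimal-nec}, which compares the full mixed systems~\eqref{Stokes-weak} and~\eqref{Stokes-disc} and can read off the pressure component directly, here I must argue entirely within the kernels and make sure the chosen load genuinely produces the intended analytical and discrete velocities. In particular, I must verify that for $u = \Smt z_h$ the infimum over $\SPveldisc$ can be driven to zero --- this requires care, since $\Smt z_h$ need not lie in $\SPveldisc$. The cleanest route is probably to restrict attention to $u \in \SPker \cap \SPkerdisc$ throughout (so the infimum vanishes because $u \in \SPveldisc$), deriving~\eqref{quasi-optimal-press-robust-nec-lapl} first, and to obtain~\eqref{quasi-optimal-press-robust-nec-div} separately by a limiting or perturbation argument on the load, or by invoking the already-established necessary condition~\eqref{quasi-optimal-nec-div} from Lemma~\ref{L:quasi-optimal-nec} together with the observation that pressure robustness is a strengthening of quasi-optimality in the divergence-free setting.
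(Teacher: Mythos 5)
Your argument for~\eqref{quasi-optimal-press-robust-nec-lapl} is correct and is exactly the intended one: for $u \in \SPker \cap \SPkerdisc$, take the load $\left\langle f, v\right\rangle = \Visc \int_\Domain \Grad u \colon \Grad v$, note that $u$ then solves~\eqref{Stokes-reduced}, use~\eqref{quasi-optimal-press-robust} with vanishing infimum (since $u \in \SPkerdisc \subseteq \SPveldisc$) to conclude $u_h = u$, and read off the identity from~\eqref{Stokes-reduced-disc}. This matches the paper, which proves this part "similarly to~\eqref{quasi-optimal-nec-lapl} in Lemma~\ref{L:quasi-optimal-nec}".

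Your argument for~\eqref{quasi-optimal-press-robust-nec-div}, however, has a genuine gap. Setting $u := \Smt z_h$ and choosing $f$ so that "$u$ solves~\eqref{Stokes-reduced}" is circular: by definition the solution of~\eqref{Stokes-reduced} lies in $\SPker$, so $\Smt z_h$ can be that solution only if $\Div \Smt z_h = 0$, which is precisely what you want to prove. Independently of this, $\Smt z_h$ need not belong to $\SPveldisc$, so the infimum in~\eqref{quasi-optimal-press-robust} cannot be driven to zero; you acknowledge this obstacle, but neither fallback you propose closes it. A "limiting or perturbation argument" is not substantiated, and the route via Lemma~\ref{L:quasi-optimal-nec} fails twice over: first, the hypothesis of the present lemma is only Definition~\ref{D:quasi-optimality-press-robust}, not Definition~\ref{D:quasi-optimal}, so Lemma~\ref{L:quasi-optimal-nec} is not available; second, as the paper observes right after the lemma, \eqref{quasi-optimal-press-robust-nec-div} is neither necessary nor sufficient for~\eqref{quasi-optimal-nec-div}. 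Indeed, for $z_h \in \SPkerdisc$ condition~\eqref{quasi-optimal-nec-div} only yields $\int_\Domain p \, \Div \Smt z_h = 0$ for all $p \in \SPpresdisc$, i.e. orthogonality of $\Div \Smt z_h$ to $\SPpresdisc$, not $\Div \Smt z_h = 0$. The missing idea is the paper's contradiction argument, which keeps the analytical velocity at zero and puts the data in the pressure: if $\Div \Smt z_h \neq 0$ for some $z_h \in \SPkerdisc$, let $(u,p) = (0, \Div \Smt z_h)$ solve~\eqref{Stokes-weak} (admissible, since $\Div \Smt z_h \in \LebH{}$), i.e. take the irrotational load $\left\langle f, v \right\rangle = -\int_\Domain (\Div \Smt z_h) \Div v$. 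Then $\left\langle f, \Smt z_h \right\rangle = -\NormLeb{\Div \Smt z_h}{}^2 \neq 0$, so~\eqref{Stokes-reduced-disc} forces $u_h \neq 0$, while $u = 0 \in \SPveldisc$ makes the infimum in~\eqref{quasi-optimal-press-robust} vanish --- a contradiction.
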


\begin{proof}
Let $z_h \in \SPkerdisc$ be such that $\Div \Smt z_h \neq 0$.
Assuming that $(u,p) = (0, \Div \Smt z_h)$ solves
\eqref{Stokes-weak}, we infer $\left\langle f, \Smt z_h \right\rangle = -\NormLeb{\Div \Smt z_h}{}^2 \neq 0$. Inserting this information
in~\eqref{Stokes-reduced-disc}, we obtain $u_h \neq
0$. Therefore, we have $\Normh{u-u_h} > \inf_{v_h \in \SPveldisc}
\Normh{u-v_h} = 0$, which contradicts quasi-optimality and
pressure robustness. This proves
\eqref{quasi-optimal-press-robust-nec-div}. Assertion
\eqref{quasi-optimal-press-robust-nec-lapl} may be checked similarly to~\eqref{quasi-optimal-nec-lapl} in Lemma~\ref{L:quasi-optimal-nec}. 
\end{proof}

Condition~\eqref{quasi-optimal-press-robust-nec-lapl} is clearly
necessary for~\eqref{quasi-optimal-nec-lapl}, while
\eqref{quasi-optimal-press-robust-nec-div} is neither necessary nor
sufficient for~\eqref{quasi-optimal-nec-div}. We mention also that
\eqref{quasi-optimal-press-robust-nec-div} differs from the condition
exploited in~\cite{Linke.Matthies.Tobiska:16} to achieve pressure
robustness, in that here $\Smt$ is required to map into $\SPvel$ and
not only into $\Hdiv{}$, cf. Remark~\ref{R:smoothing-E}. 

\begin{remark}[Failure of $\Smt=\Id_{\SPveldisc}$]
\label{R:failure-identity}
If $\SPveldisc/\SPpresdisc$ is a conforming and divergence-free pair,
the abstract discretization~\eqref{Stokes-disc} with
\eqref{conforing-div-free-disc} verifies the first necessary
condition in Lemma~\ref{L:quasi-optimal-press-robust-nec}. If,
instead, the pair is conforming but not divergence-free, we have
$\SPkerdisc \nsubseteq \SPker$. In this case, the operator $\Smt$
cannot coincide with the identity on $\SPkerdisc$. 
\end{remark}

In the next sections, we design some new discretizations proceeding as
follows. Given an inf-sup stable pair $\SPveldisc/\SPpresdisc$, together with the corresponding bilinear form $\Formpresdisc$, we construct $\Formveldisc$ and $\Smt$
so that the necessary conditions in Lemmas~\ref{L:quasi-optimal-nec}
and~\ref{L:quasi-optimal-press-robust-nec} hold true. Then, we use
standard techniques from the analysis of saddle point problems to
verify~\eqref{quasi-optimality} and~\eqref{quasi-optimal-press-robust}
and to bound the constants $\Cqo$ and $\Cqopr$. Alternatively, one
could exploit~\cite[Theorem~4.14]{Veeser.Zanotti:18}, which guarantees
that~\eqref{quasi-optimal-press-robust-nec} is a sufficient condition for quasi-optimality and pressure robustness. Such result provides also a
formula for $\Cqopr$. Analogously, generalizing the framework of
\cite{Veeser.Zanotti:18}, one could show also that
\eqref{quasi-optimal-nec} is a sufficient condition for quasi-optimality and
derive a formula for $\Cqo$. We prefer to proceed as indicated, to
make sure this paper can be read independently of
\cite{Veeser.Zanotti:18}.

\section{A paradigmatic discretization}
\label{S:paradigmatic-discretization}

Assume that we are given an inf-sup stable pair
$\SPveldisc/\SPpresdisc$, together with the corresponding bilinear
form $\Formpresdisc$. A possible strategy to fulfill the necessary
conditions~\eqref{quasi-optimal-nec-div} and
\eqref{quasi-optimal-press-robust-nec-div} is to employ a
"divergence-preserving" smoothing operator, i.e. 
\begin{equation}
\label{conservation-divergence}
\forall v_h \in \SPveldisc \qquad
\Div \Smt v_h = \Divdisc v_h.
\end{equation}
Once such operator is given, conditions~\eqref{quasi-optimal-nec-lapl}
and~\eqref{quasi-optimal-press-robust-nec-lapl} prescribe the
restriction of $\Formveldisc$ on $(\SPker \cap \SPkerdisc) \times
\SPveldisc$. Then, inspired by~\cite{Arnold:82} and~\cite{Veeser.Zanotti:18b}, we extend
the resulting form to $\SPveldisc\times \SPveldisc$, in a way that
additionally ensures symmetry and coercivity.  
%
In order to keep the exposition as clear as possible, we first exemplify this idea in a model setting. We postpone various generalizations to the
next section. 

\subsection{The unbalanced $\Poly{\Degree} / \Poly{\Degree-2}$ pair} 
\label{SS:Pl-Pl-2-pair}

We  consider hereafter pairs of finite element spaces on a
face-to-face simplicial mesh $\Mesh$ of $\Domain$ in the sense
of~\cite[Definition 1.36]{DiPietro.Ern:12}. We write $c$ for a
nondecreasing and nonnegative function of the shape parameter of
$\Mesh$, which possibly depends also on different quantities (like, e.g.,
the space dimension), but neither on other properties of $\Mesh$ nor
on the viscosity $\Visc$. Such constant may change at different
occurrences. We occasionally abbreviate $a \leq c b $ as $a \Cleq b$ and
$c^{-1} b \leq a \leq c b$ as $a \eqsim b$.  

For all integers $\Degree \geq 0$, we denote by $\Poly{\Degree}(S)$
the space of polynomials with total degree $\leq \Degree$ on a simplex
$S \subseteq \R^\Dim$. The space of $H^k$-conforming element-wise
polynomials on $\Mesh$ then reads 
\begin{equation}
\label{elementwise-polynomials}
\Polypiec{\Degree}{k}
:=
\{ v \in H^k(\Domain) \mid \forall K \in \Mesh \;\; v_{|K} \in \Poly{\Degree}(K) \}
\end{equation}  
with $k \in \{0,1\}$ and the convention $H^0(\Domain) :=
\Leb{}$. Motivated by the homogeneous boundary condition in
\eqref{Stokes-strong}, we consider the subspaces 
\begin{equation}
\label{elementwise-polynomials-sub}
\Polybnd{\Degree}{1} :=
\Polypiec{\Degree}{1} \cap \SobH{}
\qquad \text{and} \qquad
\Polyavg{\Degree}{k} :=
\Polypiec{\Degree}{k} \cap \LebH{}.
\end{equation}

To exemplify our construction, we assume $\Dim=2$ for the remaining
part of this section. We consider the conforming $\Poly{\Degree} /
\Poly{\Degree-2}$ pair, which is given by 
\begin{equation}
\label{Pl-Pl-2-pair}
\SPveldisc = (\Polybnd{\Degree}{1})^2 
\qquad \text{and} \qquad
\SPpresdisc = \Polyavg{\Degree-2}{0},
\qquad
\Formpresdisc(v_h, q_h) = -\int_\Domain q_h \Div v_h
\end{equation}
with $\Degree \geq 2$. The inf-sup condition~\eqref{inf-sup-disc}
holds with $\beta^{-1} \leq c$, see
\cite[Remark~8.6.2]{Boffi:Brezzi:Fortin.13}. 

\begin{remark}[Unbalanced pairs]
\label{R:unbalanced-pairs}
The $\Poly{\Degree}/\Poly{\Degree-2}$ pair is unbalanced, in the sense that the approximation power $\ell-1$ of the discrete pressure space in the $L^2$-norm is strictly less than the approximation power $\Degree$ of the discrete velocity space in the
$H^1$-norm. Other examples can be obtained enriching the velocity space of any
inf-sup stable pair. The use of conforming unbalanced pairs, in
combination with the standard discretization~\eqref{conforing-disc},
is discouraged by the error estimate~\eqref{intro:conforming-est} and
Remark~\ref{R:failure-identity}; see also
\cite[Remark~8.6.2]{Boffi:Brezzi:Fortin.13}. Still, quasi-optimal and
pressure robust discretizations based on such pairs would be a
valuable option, if one is more interested in the analytical velocity
rather than in the analytical pressure. 
\end{remark} 

The discrete divergence $\Divdisc$ in the $\Poly{\Degree}/\Poly{\Degree-2}$ pair
coincides with the $L^2$-orthogonal projection of the analytical divergence onto $\Polyavg{\Degree-2}{0}$. Since~\eqref{divergence-disc} actually holds for all
discrete pressures in $\Polypiec{\Degree-2}{0}$, we can compute $\Divdisc$ element-wise as
follows 
\begin{equation}
\label{divergence-disc-Pl-Pl-2}
\Divdisc v_h
=
\Ritz^K_{\Degree-2} \Div v_h 
\quad \text{in} \,\, K  
\end{equation}
for all $v_h \in (\Polybnd{\Degree}{1})^2$ and $K \in \Mesh$, where
$\Ritz^K_{\Degree-2}$ is the $L^2$-orthogonal projection onto
$\Poly{\Degree-2}(K)$. Therefore, denoting by $\SPkerdisc^\Pl$ the
discrete kernel, we conclude $\SPkerdisc^{\Pl} \nsubseteq \SPker$.\footnote{The superscript $"\Pl"$ stands for "unbalanced". Along this section, we use it to label spaces, forms and operators related to the $\Poly{\Degree}/\Poly{\Degree-2}$ pair.} This confirms that the $\Poly{\Degree}/\Poly{\Degree-2}$ pair is conforming but not divergence-free.  

The abstract discretization~\eqref{Stokes-disc} with~\eqref{conforing-disc}, based on the
$\Poly{\Degree}/\Poly{\Degree-2}$ pair, states $u_h \in
(\Polybnd{\Degree}{1})^2$ and $p_h \in \Polyavg{\Degree-2}{0}$ such
that 
\begin{equation}
\label{Stokes-Pl-Pl-2-standard}
\begin{alignedat}{2}
&\forall v_h \in (\Polybnd{\Degree}{1})^2
&\qquad
\Visc \,\int_\Domain \Grad u_h \colon \Grad v_h
-\int_\Domain p_h \Div v_h 
&= \left\langle  f , v_h \right\rangle  \\
&\forall q_h \in \Polyavg{\Degree-2}{0}
&\qquad
\int_\Domain q_h \Div u_h &= 0 .
\end{alignedat}
\end{equation}


\subsection{Local inversion of the divergence}
\label{SS:local-inversion-divergence}

Proceeding as in~\cite{Verfuerth.Zanotti:18}, we enforce
\eqref{conservation-divergence} with the help of local right inverses
of the divergence. Such operators can be defined through discrete
Stokes-like problems on the barycentric refinement of each element. To
see this, fix $K \in \Mesh$ and let $\Submesh_K$ denote the triangulation of $K$ obtained connecting each vertex with the barycenter; cf.
Figure~\ref{F:barycentric-refinement}. For $\Degree \in \N$, we define
the local spaces 
\begin{equation*}
\label{local-spaces}
\Polybnd{\Degree}{1}(\Submesh_K)
\qquad \text{and} \qquad
\Polyavg{\Degree-1}{0}(\Submesh_K)
\end{equation*}
on $\Submesh_K$ similarly to the global spaces
$\Polybnd{\Degree}{1}$ and $\Polyavg{\Degree-1}{0}$ in
\eqref{elementwise-polynomials-sub}. In particular, all $v_k \in
\Polybnd{\Degree}{1}(\Submesh_K)$ vanish on $\partial K$ and all $q_K
\in \Polyavg{\Degree-1}{0}(\Submesh_K)$ are such that $\int_K q_K =
0$. The pair $\Polybnd{\Degree}{1}(\Submesh_K)^2
/\Polyavg{\Degree-1}{0} (\Submesh_K)$ is conforming and
divergence-free in $K$.   

\begin{figure}[ht]
	\centering
	\begin{tikzpicture}
	\coordinate (z1) at (0,0);
	\coordinate (z2) at (2,0);
	\coordinate (z3) at (1,1.5);
	\coordinate (c1) at (1, 0.5);
	\path (z1) edge (z2);
	\path (z2) edge (z3);
	\path (z3) edge (z1);
	\coordinate (z4) at (4,0);
	\coordinate (z5) at (6,0);
	\coordinate (z6) at (5,1.5);
	\coordinate (c2) at (5, 0.5);
	\path (z4) edge (z5);
	\path (z5) edge (z6);
	\path (z6) edge (z4);
	\path[dashed] (z4) edge (c2);
	\path[dashed] (z5) edge (c2);
	\path[dashed] (z6) edge (c2);
	\end{tikzpicture}
	\caption{Generic element $K \in \Mesh$ (left) and barycentric refinement $\Submesh_K$ (right).}
	\label{F:barycentric-refinement}
\end{figure}

According to~\cite[Theorem~3.1]{Guzman.Neilan:18}, we have the local inf-sup stability
\begin{equation}
\label{local-inf-sup}
\forall q_K \in \Polyavg{\Degree-1}{0}(\Submesh_K) 
\qquad
\NormLeb{q_K}{K} \leq c
\sup_{v_K \in \Polybnd{\Degree}{1}(\Submesh_K)^2} \dfrac{\int_K q_K
  \Div v_K}{\NormLeb{\Grad v_K}{K}}. 
\end{equation}
This entails that we can define a linear operator
$\Rightinv_\Degree^{K}: \Leb{\Domain} \to \SobH{}^2$ as follows. Given
$q \in \Leb{}$, let $u_K = u_K(q) \in
\Polybnd{\Degree}{1}(\Submesh_K)^2$ and $p_K = p_K(q) \in
\Polyavg{\Degree-1}{0}(\Submesh_K)$ solve 
\begin{equation}
\label{local-problem-div}
\begin{alignedat}{2}
&\forall v_K \in \Polybnd{\Degree}{1}(\Submesh_K)^2
&\qquad
\int_K \Grad u_K \colon \Grad v_K 
- \int_K p_K \Div v_K &= 0\\
&\forall q_K \in \Polyavg{\Degree-1}{0}(\Submesh_K)
&\quad \;
\int_K q_K \Div u_K &= \int_K q_K q.
\end{alignedat}
\end{equation} 
Hence, we set
\begin{equation*}
\label{local-right-inverse}
\Rightinv_\Degree^{K} q := u_K \quad \text{in} \;\; K
\qquad \text{and} \qquad
\Rightinv_\Degree^{K} q := 0 \quad \text{in} \;\; \Domain \setminus K.
\end{equation*}

\begin{proposition}[Local right inverses]
\label{P:local-right-inverse}
Let $K \in \Mesh$ be a mesh element and $\Degree \in \N$. The operator $\Rightinv_\Degree^{K}$ is well-defined and, for all $q \in \Leb{}$, we have
	\begin{subequations}
		\label{local-right-inverse-prop}
		\begin{equation}
		\label{local-right-inverse-prop-stab}
		\NormLeb{\Grad \Rightinv_\Degree^{K} q}{}
		\leq c \NormLeb{q}{K}
		\end{equation}
		and
		\begin{equation}
		\label{local-right-inverse-prop-div}
		q_{|K} \in \Polyavg{\Degree-1}{0}(\Submesh_K)
		\quad \Longrightarrow \quad
		\Div \Rightinv_\Degree^{K} q = q \quad \text{in} \;\; K
		\end{equation}
	\end{subequations}
\end{proposition}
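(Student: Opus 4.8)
The plan is to recognise \eqref{local-problem-div} as a square, finite-dimensional saddle point problem and to invoke the Brezzi theory on the single element $K$. First I would settle well-posedness. The form $(u_K,v_K)\mapsto\int_K\Grad u_K\colon\Grad v_K$ satisfies $\int_K\Grad u_K\colon\Grad u_K=\NormLeb{\Grad u_K}{K}^2$, so it is coercive on $\Polybnd{\Degree}{1}(\Submesh_K)^2$ with respect to the $H^1$-seminorm $\NormLeb{\Grad\cdot}{K}$ with coercivity constant $1$; this seminorm is in fact a norm here, because the elements of $\Polybnd{\Degree}{1}(\Submesh_K)^2$ vanish on $\partial K$. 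The required inf-sup stability of the constraint form $(v_K,q_K)\mapsto\int_K q_K\Div v_K$ is precisely \eqref{local-inf-sup}. Brezzi's theorem then provides a unique pair $(u_K,p_K)$, whence $\Rightinv_\Degree^K$ is well defined; moreover $u_K\in\SobH{K}^2$ vanishes on $\partial K$, so its extension by zero belongs to $\SobH{}^2$, matching the stated codomain.

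For the stability bound \eqref{local-right-inverse-prop-stab} I would read off the a priori estimate furnished by the same theory. The first equation of \eqref{local-problem-div} has zero right-hand side, and the functional $q_K\mapsto\int_K q_K q$ driving the second equation has dual norm $\sup_{q_K}\frac{\int_K q_K q}{\NormLeb{q_K}{K}}=\NormLeb{\Lebpro_K q}{K}\le\NormLeb{q}{K}$, where $\Lebpro_K$ is the $L^2$-orthogonal projection onto $\Polyavg{\Degree-1}{0}(\Submesh_K)$. Since the continuity and coercivity constants of the velocity form are both $1$ and the only geometric quantity entering is the inf-sup constant from \eqref{local-inf-sup}, the estimate reads $\NormLeb{\Grad u_K}{K}\le c\,\NormLeb{q}{K}$ with $c$ depending solely on the shape parameter. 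As $\Rightinv_\Degree^K q$ is supported in $K$, this is exactly \eqref{local-right-inverse-prop-stab}.

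For the reproduction property \eqref{local-right-inverse-prop-div} the key observation is that $\Div u_K$ itself lies in the discrete pressure space: on each sub-simplex of $\Submesh_K$ it is a polynomial of degree $\le\Degree-1$, and testing against the constant $1$ (using the vanishing trace of $u_K$ on $\partial K$) gives $\int_K\Div u_K=0$, so that $\Div u_K\in\Polyavg{\Degree-1}{0}(\Submesh_K)$. Assuming $q_{|K}\in\Polyavg{\Degree-1}{0}(\Submesh_K)$, the difference $\Div u_K-q$ then belongs to the same space and is an admissible test function $q_K$ in the second equation of \eqref{local-problem-div}; choosing it yields $\NormLeb{\Div u_K-q}{K}^2=0$, i.e. $\Div\Rightinv_\Degree^K q=q$ in $K$.

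The only point demanding care is the uniformity of the constant in \eqref{local-right-inverse-prop-stab}: it must depend on $\Mesh$ only through the shape parameter and not on the size of $K$. Measuring the velocity in the $H^1$-seminorm is precisely what defuses this, since it turns coercivity into an identity and relegates all geometric dependence to the inf-sup constant \eqref{local-inf-sup}, which \cite{Guzman.Neilan:18} controls in terms of the shape of $\Submesh_K$. Everything else is a routine application of saddle point theory together with the divergence theorem.
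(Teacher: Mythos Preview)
Your proof is correct and follows essentially the same route as the paper's own argument: well-posedness and the stability estimate \eqref{local-right-inverse-prop-stab} via Brezzi's saddle point theory (the paper simply cites the local inf-sup \eqref{local-inf-sup} together with \cite[Corollary~4.2.1]{Boffi:Brezzi:Fortin.13}), and the reproduction property \eqref{local-right-inverse-prop-div} from the observation $\Div u_K\in\Polyavg{\Degree-1}{0}(\Submesh_K)$ combined with the second equation of \eqref{local-problem-div}. The paper's proof is terser, but you have unpacked exactly the same ingredients, including the mean-zero check via the divergence theorem and the shape-regularity dependence of the constant.
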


\begin{proof}
	The operator $\Rightinv_\Degree^{K}$ is well-defined and
        satisfies~\eqref{local-right-inverse-prop-stab} in view of the
        local inf-sup~\eqref{local-inf-sup}
        and~\cite[Corollary~4.2.1]{Boffi:Brezzi:Fortin.13}. The
        property 
        in~\eqref{local-right-inverse-prop-div} directly follows from
        the second equation of problem~\eqref{local-problem-div},
        because $\Div u_K \in \Polyavg{\Degree-1}{0}(\Submesh_K) $. 
\end{proof}

\begin{remark}[Computation of the local right inverses]
\label{R:computation-rightinv}
In what follows, we shall need to compute $\Rightinv_\Degree^{K} q$
for all $K \in \Mesh$ and various $q \in \Polypiec{\Degree-1}{0}$. To
this end, a 
possible strategy is to 
precompute the solution of~\eqref{local-problem-div} on a reference
triangle $K_\Refel$, for all possible loads $q_\Refel$ in a basis of
$\Poly{\Degree-1}(K_\Refel)$. The computational complexity of this
task only depends on $\Degree$. Then, the solution of
\eqref{local-problem-div} in $K$ can be obtained in terms of the
corresponding solution in $K_\Refel$, by means of the contravariant 
Piola transformation; see
\cite[Section~2.1.3]{Boffi:Brezzi:Fortin.13}.
\end{remark}   

We have considered here the two-dimensional case only to be consistent with the simplification introduced in section~\ref{SS:Pl-Pl-2-pair}. The same construction is actually possible
in any space dimension $\Dim \geq 2$.

\subsection{A new augmented Lagrangian formulation}
\label{SS:DG-like-formulation}

We now propose a new discretization of the Stokes equations, based on
the $\Poly{\Degree}/\Poly{\Degree-2}$ pair. The first ingredient of our construction is a
linear operator $\Smt^\Pl: (\Polybnd{\Degree}{1})^2 \to \SobH{}^2$
fulfilling~\eqref{conservation-divergence}. In view of
$\SPkerdisc^{\Pl} \nsubseteq \SPker$ and
Remark~\ref{R:failure-identity}, the identity on
$(\Polybnd{\Degree}{1})^2$ cannot accommodate this
property. Therefore, we introduce a "divergence correction"
$\Rightinv_h^\Pl: (\Polybnd{\Degree}{1})^2 \to \SobH{}^2$ 
\begin{equation*}
\label{Pl-Pl-2-div-correction}
\Rightinv_h^\Pl v_h :=
\sum_{K \in \Mesh} \Rightinv_\Degree^K ( \Divdisc v_h - \Div v_h).
\end{equation*}  

\begin{proposition}[Divergence-preserving smoothing operator]
\label{P:Pl-Pl-2-smoother}
The linear operator $\Smt^\Pl: (\Polybnd{\Degree}{1})^2 \to \SobH{}^2$ given by
\begin{equation}
\label{Pl-Pl-2-smoother}
\Smt^\Pl v_h := 
v_h + \Rightinv_h^\Pl v_h
\end{equation}
fulfills~\eqref{conservation-divergence} and is such that, for all $v_h \in (\Polybnd{\Degree}{1})^2$,
\begin{equation}
\label{Pl-Pl-2-smoother-stability}
\NormLeb{\Grad(v_h - \Smt^\Pl v_h)}{}
\eqsim
\NormLeb{\Divdisc v_h - \Div v_h}{}.
\end{equation}
\end{proposition}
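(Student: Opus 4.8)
The plan is to abbreviate $g := \Divdisc v_h - \Div v_h$, so that $\Smt^\Pl v_h = v_h + \sum_{K \in \Mesh} \Rightinv_\Degree^K g$ and the claimed identity $\Div \Smt^\Pl v_h = \Divdisc v_h$ reduces to showing $\Div \Rightinv_\Degree^K g = g$ in each $K$. First I would obtain this from the reproduction property \eqref{local-right-inverse-prop-div} of Proposition~\ref{P:local-right-inverse}, whose hypothesis requires checking that $g_{|K} \in \Polyavg{\Degree-1}{0}(\Submesh_K)$.

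This membership is the crux of the argument and splits into a degree count and a mean-value computation. Since $v_{h|K} \in \Poly{\Degree}(K)^2$ we have $\Div v_{h|K} \in \Poly{\Degree-1}(K)$, while \eqref{divergence-disc-Pl-Pl-2} gives $\Divdisc v_{h|K} = \Ritz^K_{\Degree-2}\Div v_h \in \Poly{\Degree-2}(K) \subseteq \Poly{\Degree-1}(K)$; hence $g_{|K} \in \Poly{\Degree-1}(K)$, which restricts to a polynomial of degree $\leq \Degree-1$ on every simplex of $\Submesh_K$. It remains to verify the zero-mean condition $\int_K g = 0$: because $\Degree \geq 2$, constants belong to $\Poly{\Degree-2}(K)$, so the $L^2$-orthogonality of $\Ritz^K_{\Degree-2}$ yields $\int_K \Divdisc v_h = \int_K \Div v_h$ and thus $\int_K g = 0$. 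With $g_{|K} \in \Polyavg{\Degree-1}{0}(\Submesh_K)$ secured, \eqref{local-right-inverse-prop-div} gives $\Div \Rightinv_\Degree^K g = g$ in $K$; since $\Rightinv_\Degree^K g$ vanishes outside $K$ and on $\partial K$, summing over $K \in \Mesh$ produces an $\SobH{}^2$-function with $\Div \Rightinv_h^\Pl v_h = g$ on all of $\Domain$, whence $\Div \Smt^\Pl v_h = \Div v_h + g = \Divdisc v_h$, i.e. \eqref{conservation-divergence}.

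For the norm equivalence \eqref{Pl-Pl-2-smoother-stability} I would exploit that the functions $\Rightinv_\Degree^K g$ have pairwise disjoint supports, so that $\NormLeb{\Grad \Rightinv_h^\Pl v_h}{}^2 = \sum_{K \in \Mesh} \NormLeb{\Grad \Rightinv_\Degree^K g}{K}^2$, and observe $v_h - \Smt^\Pl v_h = -\Rightinv_h^\Pl v_h$. The upper bound $\Cleq$ follows element-by-element from the stability estimate \eqref{local-right-inverse-prop-stab}, summing $\NormLeb{\Grad \Rightinv_\Degree^K g}{K}^2 \leq c^2 \NormLeb{g}{K}^2$ over $K$. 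For the lower bound $\Cgeq$ I would use the divergence reproduction just established together with the pointwise inequality $\lvert \Div w \rvert \leq \sqrt{\Dim}\, \lvert \Grad w \rvert$, giving $\NormLeb{g}{K} = \NormLeb{\Div \Rightinv_\Degree^K g}{K} \leq \sqrt{\Dim}\, \NormLeb{\Grad \Rightinv_\Degree^K g}{K}$; summing the squares recovers $\NormLeb{g}{} \leq \sqrt{\Dim}\, \NormLeb{\Grad \Rightinv_h^\Pl v_h}{}$. Combining the two bounds yields the stated equivalence. The only genuine obstacle is the membership verification in the second paragraph, everything else being a routine assembly of the local properties from Proposition~\ref{P:local-right-inverse}; and the key subtlety there is that the zero-mean requirement is supplied precisely by the orthogonality of the $L^2$-projection defining $\Divdisc$.
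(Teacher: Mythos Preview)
Your proof is correct and follows essentially the same approach as the paper's: verify that $g_{|K} \in \Polyavg{\Degree-1}{0}(\Submesh_K)$ via the degree count and the zero-mean property of $\Ritz^K_{\Degree-2}$, invoke Proposition~\ref{P:local-right-inverse} for the divergence identity, and then obtain the two halves of \eqref{Pl-Pl-2-smoother-stability} from \eqref{local-right-inverse-prop-stab} and from the divergence reproduction, respectively. You supply slightly more detail than the paper (the explicit $\sqrt{\Dim}$ constant for the lower bound, the remark that $\Degree\geq 2$ is what makes constants lie in $\Poly{\Degree-2}(K)$), but the logic is the same.
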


\begin{proof}
For all $v_h \in (\Polybnd{\Degree}{1})^2$ and $K \in \Mesh$, it holds
\begin{equation*}
\Div \Smt^\Pl v_h
=
\Div v_h + \Div \Rightinv_\Degree^K (\Divdisc v_h - \Div v_h)
\quad \text{in} \,\, K.
\end{equation*}
In view of~\eqref{divergence-disc-Pl-Pl-2}, we have $\int_K (\Divdisc v_h - \Div v_h) = 0$. Since the inclusion $v_h \in (\Polybnd{\Degree}{1})^2$ implies also $(\Divdisc v_h -
\Div v_h)_{|K} \in \Poly{\Degree-1}(K)$, Proposition~\ref{P:local-right-inverse} and the identity above ensure
that $\Smt^\Pl$ fulfills~\eqref{conservation-divergence}. This, in turn, easily implies the lower bound "$\gtrsim$" in 
\eqref{Pl-Pl-2-smoother-stability}. The corresponding upper bound "$\Cleq$" is a consequence of the identity $\NormLeb{\Grad (v_h - \Smt^\Pl v_h)}{K} = \NormLeb{\Grad \Rightinv^K_\Degree v_h}{K}$, $K \in \Mesh$, combined with~\eqref{local-right-inverse-prop-stab}. 
\end{proof}

The second ingredient of our construction is a suitable bilinear form
$\Formveldisc$. Accounting for the definition of $\Smt^\Pl$ in~\eqref{Pl-Pl-2-smoother}, the necessary conditions
\eqref{quasi-optimal-nec-lapl} and
\eqref{quasi-optimal-press-robust-nec-lapl} prescribe 
\begin{equation}
\label{Pl-Pl-2-bilinear-form-restriction}
\Formveldisc(u, v_h) 
=
\int_\Domain \Grad u \colon \Grad v_h + 
\int_\Domain \Grad u \colon \Grad \Rightinv_h^\Pl v_h
\end{equation} 
for all $u \in \SPker \cap \SPkerdisc^{\Pl}$ and $v_h \in
(\Polybnd{\Degree}{1})^2$. A simple option would be to let the
right-hand side define $\Formveldisc$ on $(\Polybnd{\Degree}{1})^2
\times (\Polybnd{\Degree}{1})^2$. Still, it has to be noticed that the
second summand $\int_\Domain \Grad u \colon \Grad \Rightinv_h^\Pl v_h
= - \sum_{K\in \Mesh}\int_K \Lapl u \cdot \Rightinv_h^\Pl v_h $ cannot
be expected to vanish. Therefore, it obstructs the symmetry and,
possibly, also the nondegeneracy of $\Formveldisc$. To overcome this
problem, we observe that $\Rightinv_h^\Pl$ vanishes on $\SPker \cap
\SPkerdisc^{\Pl}$, according to
\eqref{Pl-Pl-2-smoother-stability}. This suggests to re-establish
symmetry and nondegeneracy mimicking the construction of the Symmetric
Interior Penalty (DG-SIP) discretization of second-order problems,
see~\cite{Arnold:82} or~\cite[section~4.2.1]{DiPietro.Ern:12}. Thus, we set $\Formveldisc =
\Formveldisc^\Pl$, where 
\begin{equation}
\label{Pl-Pl-2-bilinear-form}
\begin{split}
\Formveldisc^\Pl(w_h, v_h) 
:=
&\int_\Domain \Grad w_h \colon \Grad v_h + 
\int_\Domain \Grad w_h \colon \Grad \Rightinv_h^\Pl v_h +\\
&+\int_\Domain \Grad \Rightinv_h^\Pl w_h \colon \Grad v_h +
\eta \int_\Domain \Grad \Rightinv_h^\Pl w_h \colon \Grad \Rightinv_h^\Pl v_h
\end{split}
\end{equation}  
where $\eta > 0$ is a penalty parameter. Note that $\Formveldisc^\Pl$ fulfills~\eqref{Pl-Pl-2-bilinear-form-restriction}.

The abstract discretization~\eqref{Stokes-disc} with the $\Poly{\Degree}/\Poly{\Degree-2}$ pair, $\Formveldisc =
\Formveldisc^\Pl$ and $\Smt = \Smt^\Pl$ reads as follows: Find $u_h
\in (\Polybnd{\Degree}{1})^2$ and $p_h \in \Polyavg{\Degree-2}{0}$
such that  
\begin{equation}
\label{Stokes-Pl-Pl-2}
\begin{alignedat}{2}
&\forall v_h \in (\Polybnd{\Degree}{1})^2
&\qquad
\Visc \,\Formveldisc^\Pl(u_h, v_h)
-\int_\Domain p_h \Div v_h 
&= \langle  f , \Smt^\Pl v_h \rangle  \\
&\forall q_h \in \Polyavg{\Degree-2}{0}
&\qquad
\int_\Domain q_h \Div u_h &= 0 .
\end{alignedat}
\end{equation}

We begin our discussion on the new discretization by checking that a
solution $(u_h, p_h)$ exists and is unique. In view of the
above-mentioned inf-sup stability of the
$\Poly{\Degree}/\Poly{\Degree-2}$ pair, it suffices to prove that
$\Formveldisc^\Pl$ is coercive on $(\Polybnd{\Degree}{1})^2$. We proceed similarly as in~\cite[Lemma~4.1.2]{DiPietro.Ern:12}.

\begin{lemma}[Coercivity of $\Formveldisc^\Pl$]
\label{L:Pl-Pl-2-coercivity}
The bilinear form $\Formveldisc^\Pl$ is coercive on
$(\Polybnd{\Degree}{1})^2$ for all $\eta > 1$ and we have 
\begin{equation*}
\label{Pl-Pl-2-coercivity}
\Formveldisc^\Pl (v_h, v_h) \geq \left( 1 - \dfrac{1}{\eta}\right)
\NormLeb{\Grad v_h}{}^2 
\end{equation*}
for all $v_h \in (\Polybnd{\Degree}{1})^2$.
\end{lemma}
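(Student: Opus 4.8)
The plan is to substitute $w_h = v_h$ directly into the definition~\eqref{Pl-Pl-2-bilinear-form}. Because the second and third summands there both reduce to $\int_\Domain \Grad v_h \colon \Grad \Rightinv_h^\Pl v_h$ when $w_h = v_h$, the diagonal of the form collapses to the quadratic expression
\begin{equation*}
\Formveldisc^\Pl(v_h, v_h) = \NormLeb{\Grad v_h}{}^2 + 2 \int_\Domain \Grad v_h \colon \Grad \Rightinv_h^\Pl v_h + \eta \NormLeb{\Grad \Rightinv_h^\Pl v_h}{}^2 .
\end{equation*}
The whole statement is then a positivity assertion about this quadratic form in the two scalars $\NormLeb{\Grad v_h}{}$ and $\NormLeb{\Grad \Rightinv_h^\Pl v_h}{}$, and the rest of the argument is purely algebraic.

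First I would control the indefinite cross term from below. By the Cauchy--Schwarz inequality, the middle summand is bounded below by $-2\, \NormLeb{\Grad v_h}{}\, \NormLeb{\Grad \Rightinv_h^\Pl v_h}{}$. Abbreviating $a := \NormLeb{\Grad v_h}{}$ and $b := \NormLeb{\Grad \Rightinv_h^\Pl v_h}{}$, the estimate reduces to the elementary scalar inequality
\begin{equation*}
a^2 - 2ab + \eta b^2 \geq \left( 1 - \tfrac{1}{\eta} \right) a^2 .
\end{equation*}

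The second step is to establish this scalar inequality, which is where both the constant $1 - \eta^{-1}$ and the threshold $\eta > 1$ originate. Rather than invoking Young's inequality with an unspecified weight, I would complete the square: the difference
\begin{equation*}
a^2 - 2ab + \eta b^2 - \left( 1 - \tfrac{1}{\eta} \right) a^2 = \tfrac{1}{\eta} a^2 - 2ab + \eta b^2 = \left( \tfrac{1}{\sqrt{\eta}}\, a - \sqrt{\eta}\, b \right)^2 \geq 0
\end{equation*}
is manifestly a perfect square. This proves the bound and simultaneously pins down the optimal weight automatically. Reassembling the two steps yields the claimed lower bound, and positivity of the factor $1 - \eta^{-1}$ for $\eta > 1$ delivers coercivity.

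I do not anticipate a genuine obstacle: this is the standard completing-the-square argument underlying DG-SIP coercivity, as in~\cite[Lemma~4.1.2]{DiPietro.Ern:12}. The only points demanding care are the bookkeeping of the sign in the Cauchy--Schwarz step and the splitting of the cross term $2ab$; completing the square sidesteps the second issue by letting the correct weight emerge from the factorization, and it is worth noting that no property of $\Rightinv_h^\Pl$ beyond the mere existence of the quantity $b$ is used, so the estimate holds for every $v_h \in (\Polybnd{\Degree}{1})^2$ as stated.
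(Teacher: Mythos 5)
Your proof is correct and follows essentially the same route as the paper: setting $w_h = v_h$ in~\eqref{Pl-Pl-2-bilinear-form} to obtain the quadratic identity, then bounding the cross term via Cauchy--Schwarz and the weighted Young's inequality with weight $\eta$ --- your completing-the-square step is precisely the proof of that Young's inequality, so the two arguments coincide. The only difference is presentational: you make explicit the algebraic verification that the paper delegates to the phrase ``weighted Young's inequality''.
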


\begin{proof}
Let $v_h \in (\Polybnd{\Degree}{1})^2$. Setting $w_h = v_h$ in~\eqref{Pl-Pl-2-bilinear-form}, we obtain 
\begin{equation*}
\Formveldisc^\Pl (v_h, v_h)
=
\NormLeb{\Grad v_h}{}^2 + \eta
\NormLeb{\Grad \Rightinv_h^\Pl v_h}{}^2
+2 \int_\Domain \Grad v_h \colon \Grad \Rightinv_h^\Pl v_h.
\end{equation*}
The Cauchy-Schwartz and the weighted Young's inequality further
provide the upper bound $2\NormSemi{\int_\Domain \Grad v_h \colon \Grad \Rightinv_h^\Pl v_h} \leq \eta^{-1} \NormLeb{\Grad v_h}{}^2 + \eta \NormLeb{\Grad \Rightinv_h^\Pl v_h}{}^2$. Inserting this inequality into the previous identity concludes the proof.   
\end{proof}

Let us comment on the cost for assembling and solving the new discretization.

\begin{remark}[Feasibility of the new discretization]
\label{R:Pl-Pl-2-feasibility}
Assume that $\{\varphi_1, \dots, \varphi_N\}$ and $\{\psi_1, \dots,
\psi_M\}$ are nodal bases of $(\Polybnd{\Degree}{1})^2$ and
$\Polyavg{\Degree-2}{0}$, respectively. All functions $\varphi_i$ and
$\psi_k$, with $i=1,\dots, N$ and $k=1,\dots,M$, are locally
supported. Hence, the construction of $\Smt^\Pl \varphi_i$ involves
the solution of a limited number of local problems
\eqref{local-problem-div} and we have $\Supp(\Smt^\Pl \varphi_i)
\subseteq \Supp(\varphi_i)$. Moreover, thanks to the local
characterization of the discrete divergence
\eqref{divergence-disc-Pl-Pl-2}, the entire computation of $\Smt^\Pl
\varphi_i$ requires $O(1)$ operations. This entails that the bilinear
forms $\Formveldisc^\Pl (\varphi_i, \varphi_j)$ and $\int_\Domain
\psi_k \Div \varphi_i$ and the linear form $\langle f, \Smt^\Pl
\varphi_i \rangle$ can be evaluated with $O(1)$ operations for all
$i,j=1, \dots, N$ and $k=1,\dots, M$. Thus, the discretization
\eqref{Stokes-Pl-Pl-2} is computationally feasible, in the sense of
Remark~\ref{R:computational-feasibility}. Let us mention also that the
stiffness matrices associated with $\Formveldisc^\Pl$ and its
counterpart in~\eqref{Stokes-Pl-Pl-2-standard} are of course different
but, for all $\eta>1$, their condition numbers differ, at most, by the
ratio of the continuity and the coercivity constants of
$\Formveldisc^\Pl$. This ratio is bounded by $c \eta^2(\eta-1)^{-1}$, as a consequence of Proposition~\ref{P:Pl-Pl-2-smoother} and Lemma~\ref{L:Pl-Pl-2-coercivity}.
\end{remark}

The following remarks connect~\eqref{Stokes-Pl-Pl-2} with other
existing discretizations.  

\begin{remark}[Connection with augmented Lagrangian formulations]
\label{R:connection-augmented-lagrangian}
In view of \eqref{Pl-Pl-2-smoother-stability}, the last summand $\eta
\int_\Domain \Grad \Rightinv_h^\Pl w_h \colon \Grad \Rightinv_h^\Pl
v_h$ in the definition of $\Formveldisc^\Pl$ penalizes the functions
that are in the discrete kernel $\SPkerdisc^{\Pl}$ and not in
$\SPker$. More precisely, the penalization is equivalent to $\eta
\int_\Domain \Div w_h \Div v_h$ on $\SPkerdisc^{\Pl}$. This indicates
that~\eqref{Stokes-Pl-Pl-2} can be interpreted as a new augmented
Lagrangian formulation for the Stokes problem; see
\cite[Section~6.1]{Boffi:Brezzi:Fortin.13}. The additional terms
enforcing consistency and symmetry distinguish our formulation from
previous ones. 
\end{remark}

\begin{remark}[Connection with DG discretizations]
\label{R:connection-DG}
The DG-SIP bilinear form in~\cite{Arnold:82} consists of four
terms. The first two terms serve to accommodate consistency, see
\cite[Section~4.2]{DiPietro.Ern:12} or~\cite{Veeser.Zanotti:18b}. In
particular, the second one arises due to the use of possibly
nonconforming, i.e. discontinuous, functions. The two remaining terms
are designed to further enforce symmetry and
coercivity, respectively, still preserving consistency. The same
structure can be observed in the 
form $\Formveldisc^\Pl$. Here nonconformity has to be intended in the
sense that $\SPkerdisc^\Pl \nsubseteq \SPker$, i.e. discretely
divergence-free functions are possibly not divergence-free. A
remarkable difference from the DG-SIP bilinear form is that the
coercivity of $\Formveldisc^\Pl$ can be guaranteed for all $\eta > 1$
and not only for sufficiently large $\eta$.  
\end{remark}

\begin{remark}[Connection with R-FEM discretizations]
\label{R:connection-recovered}
Rearranging terms in~\eqref{Pl-Pl-2-bilinear-form}, we see that the form $\Formveldisc^\Pl$ can be rewritten as follows
\begin{equation}
\label{Pl-Pl-2-bilinear-form-revisited}
\Formveldisc^\Pl (w_h, v_h)
=
\int_\Domain \Grad \Smt^\Pl w_h \colon \Grad \Smt^\Pl v_h
+
(\eta-1) \int_\Domain \Grad \Rightinv_h^\Pl w_h \colon \Grad \Rightinv_h^\Pl v_h.
\end{equation}
This sheds additional light on the condition $\eta>1$ in Lemma~\ref{L:Pl-Pl-2-coercivity} and provides an interesting connection with the Recovered Finite Element
Method (R-FEM) of Georgoulis and Pryer~\cite{Georgoulis.Pryer:18}.
\end{remark}

\subsection{Error estimates}
\label{SS:error-estimates}

We now aim at showing that, unlike
\eqref{Stokes-Pl-Pl-2-standard},
\eqref{Stokes-Pl-Pl-2} is a quasi-optimal
and pressure robust discretization of~\eqref{Stokes-weak}. As a preliminary step, we bound the
consistency error generated by the last two terms in the definition of
$\Formveldisc^\Pl$. Such terms can be expected to generate a
consistency error, as they were artificially added to the right-hand
side of~\eqref{Pl-Pl-2-bilinear-form-restriction}. 

\begin{lemma}[Consistency error]
\label{L:Pl-Pl-2-consistency}
Let $\eta > 1$ be given. We have
\begin{equation}
\label{Pl-Pl-2-consistency}
\NormSemi{\int_\Domain \Grad z_h \colon \Grad \Smt^\Pl v_h - \Formveldisc^\Pl (z_h, v_h)}
\Cleq 
\eta 
\inf_{z \in \SPker} \NormLeb{\Grad(z-z_h)}{} \NormLeb{\Grad v_h}{}
\end{equation}
for all $z_h \in \SPkerdisc^{\Pl}$  and $v_h \in (\Polybnd{\Degree}{1})^2$.
\end{lemma}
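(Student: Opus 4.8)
The plan is to start by making the left-hand side of~\eqref{Pl-Pl-2-consistency} fully explicit. Inserting $\Smt^\Pl v_h = v_h + \Rightinv_h^\Pl v_h$ into the first term and expanding $\Formveldisc^\Pl(z_h,v_h)$ according to~\eqref{Pl-Pl-2-bilinear-form}, the summands $\int_\Domain \Grad z_h \colon \Grad v_h$ and $\int_\Domain \Grad z_h \colon \Grad \Rightinv_h^\Pl v_h$ cancel, leaving
\begin{equation*}
\int_\Domain \Grad z_h \colon \Grad \Smt^\Pl v_h - \Formveldisc^\Pl(z_h,v_h)
= - \int_\Domain \Grad \Rightinv_h^\Pl z_h \colon \Grad v_h
- \eta \int_\Domain \Grad \Rightinv_h^\Pl z_h \colon \Grad \Rightinv_h^\Pl v_h.
\end{equation*}
Both remaining integrals carry the factor $\Grad \Rightinv_h^\Pl z_h$, so the whole estimate reduces to controlling $\NormLeb{\Grad \Rightinv_h^\Pl z_h}{}$ by the distance of $z_h$ to $\SPker$.

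Next I would apply the Cauchy--Schwarz inequality termwise: the first integral is bounded by $\NormLeb{\Grad \Rightinv_h^\Pl z_h}{}\,\NormLeb{\Grad v_h}{}$ and the second by $\eta\,\NormLeb{\Grad \Rightinv_h^\Pl z_h}{}\,\NormLeb{\Grad \Rightinv_h^\Pl v_h}{}$. For the latter I would additionally estimate $\NormLeb{\Grad \Rightinv_h^\Pl v_h}{} \Cleq \NormLeb{\Grad v_h}{}$: indeed~\eqref{Pl-Pl-2-smoother-stability} gives $\NormLeb{\Grad \Rightinv_h^\Pl v_h}{} \eqsim \NormLeb{\Divdisc v_h - \Div v_h}{}$, and since $\Divdisc v_h$ is, by~\eqref{divergence-disc-Pl-Pl-2}, the $L^2$-orthogonal projection of $\Div v_h$, the projection is a contraction and $\NormLeb{\Divdisc v_h - \Div v_h}{} \leq \NormLeb{\Div v_h}{} \leq \sqrt{\Dim}\,\NormLeb{\Grad v_h}{}$, using the pointwise bound $\NormSemi{\Div w}\leq \sqrt{\Dim}\,\NormSemi{\Grad w}$. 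Both terms are then dominated by a constant times $\NormLeb{\Grad \Rightinv_h^\Pl z_h}{}\,\NormLeb{\Grad v_h}{}$, with a prefactor $1$ and $\eta$ respectively.

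The key step, which I expect to be the heart of the argument, is the bound on $\NormLeb{\Grad \Rightinv_h^\Pl z_h}{}$ itself. Since $z_h \in \SPkerdisc^{\Pl}$ we have $\Divdisc z_h = 0$, so~\eqref{Pl-Pl-2-smoother-stability} yields $\NormLeb{\Grad \Rightinv_h^\Pl z_h}{} \eqsim \NormLeb{\Divdisc z_h - \Div z_h}{} = \NormLeb{\Div z_h}{}$. For every $z \in \SPker$ one has $\Div z = 0$, whence $\NormLeb{\Div z_h}{} = \NormLeb{\Div(z_h - z)}{} \leq \sqrt{\Dim}\,\NormLeb{\Grad(z - z_h)}{}$. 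Passing to the infimum over $z \in \SPker$ gives $\NormLeb{\Grad \Rightinv_h^\Pl z_h}{} \Cleq \inf_{z \in \SPker}\NormLeb{\Grad(z - z_h)}{}$, which is precisely the factor appearing on the right-hand side of~\eqref{Pl-Pl-2-consistency}.

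Finally I would combine the three estimates: the difference is bounded by $(1+\eta)$ times a shape-dependent constant times $\NormLeb{\Grad \Rightinv_h^\Pl z_h}{}\,\NormLeb{\Grad v_h}{}$, and inserting the kernel bound together with $\eta > 1$ (so that $1+\eta \leq 2\eta$) produces exactly the claimed right-hand side. The only point requiring care is to keep the dependence on $\eta$ linear: the cross term should be estimated by plain Cauchy--Schwarz rather than by a weighted Young's inequality, which would otherwise introduce an $\eta^2$.
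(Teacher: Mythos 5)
Your proposal is correct and follows essentially the same route as the paper's proof: the same cancellation identity reducing the difference to $-\int_\Domain \Grad \Rightinv_h^\Pl z_h \colon \Grad(v_h + \eta \Rightinv_h^\Pl v_h)$, the same use of the equivalence~\eqref{Pl-Pl-2-smoother-stability} (with $\Divdisc z_h = 0$ and $\Div z = 0$) to bound $\NormLeb{\Grad \Rightinv_h^\Pl z_h}{}$ by the distance of $z_h$ to $\SPker$, and the same combination of the $L^2$-projection characterization~\eqref{divergence-disc-Pl-Pl-2} with plain Cauchy--Schwarz to keep the dependence on $\eta$ linear. The only cosmetic difference is that you estimate the two residual terms separately, whereas the paper bounds $\NormLeb{\Grad(v_h + \eta \Rightinv_h^\Pl v_h)}{}$ as a whole.
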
 

\begin{proof}
The definitions of $\Formveldisc^\Pl$ and $\Smt^\Pl$ imply
\begin{equation*}
\int_\Domain \Grad z_h \colon \Grad \Smt^\Pl v_h - \Formveldisc^\Pl (z_h, v_h)
=
-\int_\Domain \Grad \Rightinv_h^\Pl z_h \colon \Grad(v_h + \eta \Rightinv_h^\Pl v_h).
\end{equation*}	
The equivalence~\eqref{Pl-Pl-2-smoother-stability} reveals, in
particular, $\NormLeb{\Grad \Rightinv_h^\Pl z_h}{} \Cleq
\NormLeb{\Grad(z-z_h)}{}$ for all $z \in \SPker$. The characterization
\eqref{divergence-disc-Pl-Pl-2} of the discrete divergence $\Divdisc$ and~\eqref{Pl-Pl-2-smoother-stability} entail also $\NormLeb{\Grad(v_h + \eta \Rightinv_h^\Pl v_h)}{} \Cleq \eta
\NormLeb{\Grad v_h}{}$. Inserting these bounds into the identity
above concludes the proof. 
\end{proof}

Recall from section~\ref{SS:quasi-optimality-press-robustness} that
the discrete velocity $u_h$ solving~\eqref{Stokes-Pl-Pl-2} is in the
discrete kernel $\SPkerdisc^{\Pl}$ and can be equivalently
characterized through the reduced problem 
\begin{equation}
\label{Stokes-Pl-Pl-2-reduced}
\forall z_h \in \SPkerdisc^{\Pl} \qquad
\Visc \,\Formveldisc^\Pl (u_h, z_h) = \langle f, \Smt^\Pl z_h \rangle. 
\end{equation}

\begin{theorem}[Quasi-optimality and pressure robustness]
\label{T:Pl-Pl-2-velocity-error}
For all $\eta >1$, problem~\eqref{Stokes-Pl-Pl-2} is a quasi-optimal
and pressure robust discretization of~\eqref{Stokes-weak} with
constant $\Cqopr \leq c \eta^2(\eta-1)^{-1}$. 
\end{theorem}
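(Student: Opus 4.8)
The plan is to run a second-Strang-type argument directly on the reduced problems. Since the $\Poly{\Degree}/\Poly{\Degree-2}$ pair is conforming, the norm $\Normh{\cdot}$ coincides with $\NormLeb{\Grad\cdot}{}$ and the analytical velocity satisfies $u \in \SPker$. I fix an arbitrary $z_h \in \SPkerdisc^{\Pl}$, set $v_h := u_h - z_h \in \SPkerdisc^{\Pl}$, and start from the coercivity estimate of Lemma~\ref{L:Pl-Pl-2-coercivity}, namely $(1-\eta^{-1})\NormLeb{\Grad v_h}{}^2 \leq \Formveldisc^\Pl(v_h, v_h)$. It then suffices to bound $\Formveldisc^\Pl(v_h, v_h) = \Formveldisc^\Pl(u_h, v_h) - \Formveldisc^\Pl(z_h, v_h)$ from above in terms of $\NormLeb{\Grad(u - z_h)}{}\NormLeb{\Grad v_h}{}$.

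The key observation, which I expect to be the heart of the proof, is that $\Smt^\Pl$ maps the discrete kernel into the analytical one. Indeed, since $\Smt^\Pl$ fulfills~\eqref{conservation-divergence}, every $w_h \in \SPkerdisc^{\Pl}$ obeys $\Div \Smt^\Pl w_h = \Divdisc w_h = 0$, hence $\Smt^\Pl w_h \in \SPker$. In particular $\Smt^\Pl v_h \in \SPker$ is an admissible test function in the reduced problem~\eqref{Stokes-reduced}; combining this with the discrete reduced problem~\eqref{Stokes-Pl-Pl-2-reduced} tested against $v_h$ gives $\Formveldisc^\Pl(u_h, v_h) = \Visc^{-1}\langle f, \Smt^\Pl v_h\rangle = \int_\Domain \Grad u \colon \Grad \Smt^\Pl v_h$. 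Inserting $\pm\int_\Domain\Grad z_h\colon\Grad\Smt^\Pl v_h$ I then arrive at
\begin{equation*}
\Formveldisc^\Pl(v_h, v_h) = \int_\Domain \Grad(u - z_h) \colon \Grad \Smt^\Pl v_h + \Big( \int_\Domain \Grad z_h \colon \Grad \Smt^\Pl v_h - \Formveldisc^\Pl(z_h, v_h) \Big).
\end{equation*}
This is exactly the pressure-robustness mechanism at work: the load enters only through $\Smt^\Pl v_h \in \SPker$, so the analytical pressure has disappeared from the right-hand side.

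I would then estimate the two summands separately. The parenthesised term is the consistency error controlled by Lemma~\ref{L:Pl-Pl-2-consistency}; using $u \in \SPker$ so that $\inf_{z\in\SPker}\NormLeb{\Grad(z-z_h)}{} \leq \NormLeb{\Grad(u-z_h)}{}$, it is bounded by $c\,\eta\,\NormLeb{\Grad(u-z_h)}{}\NormLeb{\Grad v_h}{}$. For the first summand I apply Cauchy–Schwarz together with the $\eta$-independent stability $\NormLeb{\Grad\Smt^\Pl v_h}{} \Cleq \NormLeb{\Grad v_h}{}$, which follows from~\eqref{Pl-Pl-2-smoother-stability} once I use $\Divdisc v_h = 0$ and $\NormLeb{\Div v_h}{} \leq \NormLeb{\Grad v_h}{}$. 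Combining, dividing by $\NormLeb{\Grad v_h}{}$, and absorbing the coercivity factor $(1-\eta^{-1})^{-1} = \eta(\eta-1)^{-1}$ yields $\NormLeb{\Grad v_h}{} \Cleq \eta(\eta-1)^{-1}(1+\eta)\NormLeb{\Grad(u-z_h)}{}$. A final triangle inequality $\NormLeb{\Grad(u-u_h)}{} \leq \NormLeb{\Grad(u-z_h)}{} + \NormLeb{\Grad v_h}{}$ produces a constant whose dominant term is $\eta^2(\eta-1)^{-1}$, as claimed.

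Since $z_h \in \SPkerdisc^{\Pl}$ was arbitrary, I pass to the infimum over the discrete kernel and close the argument by invoking~\eqref{best-errors-Vh-Zh} (with $\beta^{-1}\leq c$) to replace $\inf_{z_h\in\SPkerdisc^{\Pl}}\NormLeb{\Grad(u-z_h)}{}$ by $\inf_{w_h\in(\Polybnd{\Degree}{1})^2}\Normh{u-w_h}$, which gives~\eqref{quasi-optimal-press-robust} with $\Cqopr \leq c\,\eta^2(\eta-1)^{-1}$. The only genuinely delicate point is the Galerkin-type identity in the second paragraph: everything hinges on $\Smt^\Pl v_h \in \SPker$, i.e. on the divergence-preserving property engineered into $\Smt^\Pl$, which is precisely what decouples the velocity error from the pressure.
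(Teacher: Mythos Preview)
Your proof is correct and follows essentially the same route as the paper's: start from the coercivity of $\Formveldisc^\Pl$ on $v_h=u_h-z_h$, use the divergence-preserving property of $\Smt^\Pl$ to obtain $\Smt^\Pl v_h\in\SPker$ and hence replace $\Visc^{-1}\langle f,\Smt^\Pl v_h\rangle$ by $\int_\Domain\Grad u\colon\Grad\Smt^\Pl v_h$, then split into a stability term controlled via Proposition~\ref{P:Pl-Pl-2-smoother} and a consistency term controlled via Lemma~\ref{L:Pl-Pl-2-consistency}, and close with~\eqref{best-errors-Vh-Zh}. The paper's write-up is slightly more compressed but the argument and the resulting constant are identical.
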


\begin{proof}
Denote by $u \in \SPker$ and $u_h \in \SPkerdisc^{\Pl}$ the solutions
of problems~\eqref{Stokes-reduced} and~\eqref{Stokes-Pl-Pl-2-reduced},
respectively, with load $f \in \SobD{}^2$ and viscosity $\Visc >
0$. Let $z_h \in \SPkerdisc^{\Pl}$ be arbitrary and define $v_h := u_h
- z_h$. Lemma~\ref{L:Pl-Pl-2-coercivity} and problem
\eqref{Stokes-Pl-Pl-2-reduced} reveal 
\begin{equation*}
\left( 1 - \dfrac{1}{\eta} \right) 
\NormLeb{\Grad(u_h- z_h)}{}^2
\leq
\dfrac{1}{\Visc} \langle f, \Smt^\Pl v_h \rangle 
- \Formveldisc^\Pl(z_h, v_h).
\end{equation*}
Since $v_h \in \SPkerdisc^{\Pl}$, we have $\Smt^\Pl v_h \in \SPker$ as
a consequence of Proposition~\ref{P:Pl-Pl-2-smoother}. Hence, problem
\eqref{Stokes-reduced} yields $\Visc^{-1} \langle f, \Smt^\Pl v_h
\rangle = \int_\Domain \Grad u \colon \Grad \Smt^\Pl v_h$. We insert
this identity into the previous inequality and invoke
Proposition~\ref{P:Pl-Pl-2-smoother} and
Lemma~\ref{L:Pl-Pl-2-consistency}. Owing to the inclusion $u \in
\SPker$, it results 
\begin{equation*}
\NormLeb{\Grad(u_h- z_h)}{}
\leq c \eta^2(\eta-1)^{-1} \NormLeb{\Grad(u-z_h)}{}.
\end{equation*} 
We conclude taking the infimum over all $z_h \in \SPkerdisc$ and
recalling~\eqref{best-errors-Vh-Zh}. 
\end{proof}

Let us mention that a better bound of the constant $\Cqopr$ in terms of $\eta$, namely $\Cqopr \leq c \eta (\eta-1)^{-1/2}$, could be obtained with the help of~\cite[Theorem~4.14]{Veeser.Zanotti:18}. Both, this estimate and the one in Theorem~\ref{T:Pl-Pl-2-velocity-error}, suggest to set $\eta=2$. The next remark additionally confirm that we may have $\Cqopr \to +\infty$ as $\eta \to +\infty$, thus pointing out the importance of explicitly knowing a safe value of the penalty parameter. 

\begin{remark}[Locking effect]
\label{R:locking}
The penalization in $\Formveldisc^\Pl$ imposes that the solution $u_h^\Pl$ of~\eqref{Stokes-Pl-Pl-2-reduced} approaches the subspace $\SPker \cap \SPkerdisc^\Pl$ for $\eta \to + \infty$, as a consequence of Proposition~\ref{P:Pl-Pl-2-smoother}. This entails that the constant $\Cqopr$ in Theorem~\ref{T:Pl-Pl-2-velocity-error} remains bounded in the limit $\eta \to +\infty$ only if the equivalence 
\begin{equation}
\label{best-errors-Pl-Pl-2}
\inf_{z_h \in \SPker \cap \SPkerdisc^{\Pl}} \NormLeb{\Grad(z-z_h)}{} \stackrel{!}{\eqsim}
\inf_{w_h \in (\Polybnd{\Degree}{1})^2} \NormLeb{\Grad(z-w_h)}{}
\end{equation}
holds for all $z \in \SPker$. Conversely, if~\eqref{best-errors-Pl-Pl-2} holds, we can assume that the function $z_h$ in the proof of Theorem~\ref{T:Pl-Pl-2-velocity-error} varies only in $\SPker \cap \SPkerdisc^\Pl$. This, in turn, provides a robust upper bound of $\Cqopr$ in the limit $\eta \to +\infty$. Whenever condition~\eqref{best-errors-Pl-Pl-2} fails, a locking effect may occur, in the sense of~\cite{Babuska.Suri:92b}. We illustrate this in section~\ref{SS:numerics-locking} by means of a numerical experiment.   
\end{remark} 

Theorem~\ref{T:Pl-Pl-2-velocity-error} states that the discretization
\eqref{Stokes-Pl-Pl-2} enjoys a better velocity $H^1$-error estimate than the standard one~\eqref{Stokes-Pl-Pl-2-standard},
cf. Remark~\ref{R:failure-identity}. The next result additionally
ensures that the two discretizations are actually comparable if one
considers the sum of the velocity $H^1$-error times viscosity plus the
pressure $L^2$-error. Thus, in other words, the modifications introduced in~\eqref{Stokes-Pl-Pl-2} do not impair the quasi-optimality of~\eqref{Stokes-Pl-Pl-2-standard}.  

\begin{theorem}[Quasi-optimality]
\label{T:Pl-Pl-2-pressure-error}
For all $\eta > 1$, problem~\eqref{Stokes-Pl-Pl-2} is a quasi-optimal
discretization of~\eqref{Stokes-weak} with constant $\Cqo \Cleq 
\eta^3/(\eta-1)$.
\end{theorem}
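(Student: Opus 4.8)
The plan is to reuse the velocity estimate already established in Theorem~\ref{T:Pl-Pl-2-velocity-error} and to derive the pressure bound from the inf-sup condition~\eqref{inf-sup-disc}, so that the two combine into the quasi-optimality estimate~\eqref{quasi-optimality}. Since $\SPveldisc \subseteq \SPvel$ here, the norm $\Normh{\cdot}$ coincides with $\NormLeb{\Grad \cdot}{}$. Fixing an arbitrary $q_h \in \Polyavg{\Degree-2}{0}$, I would apply~\eqref{inf-sup-disc} to $p_h - q_h$ and estimate the numerator $\Formpresdisc(v_h, p_h - q_h)$ for a generic $v_h \in (\Polybnd{\Degree}{1})^2$.

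The key step is to rewrite this numerator. Using the first equation of~\eqref{Stokes-Pl-Pl-2} to replace $-\int_\Domain p_h \Div v_h$, testing the first equation of~\eqref{Stokes-weak} with the admissible function $\Smt^\Pl v_h \in \SobH{}^2$, and invoking the divergence-preserving identity $\Div \Smt^\Pl v_h = \Divdisc v_h$ from Proposition~\ref{P:Pl-Pl-2-smoother}, I would obtain
\begin{equation*}
\Formpresdisc(v_h, p_h - q_h) = \Visc \left( \int_\Domain \Grad u \colon \Grad \Smt^\Pl v_h - \Formveldisc^\Pl(u_h, v_h) \right) + \int_\Domain (q_h - p) \Divdisc v_h,
\end{equation*}
where I have used $\int_\Domain q_h \Div v_h = \int_\Domain q_h \Divdisc v_h$, which holds because $q_h \in \Polyavg{\Degree-2}{0}$ and $\Divdisc$ is the $L^2$-projection onto that space, see~\eqref{divergence-disc-Pl-Pl-2}.

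For the viscous bracket I would add and subtract $\int_\Domain \Grad u_h \colon \Grad \Smt^\Pl v_h$: the difference $\int_\Domain \Grad(u - u_h) \colon \Grad \Smt^\Pl v_h$ is bounded by $\NormLeb{\Grad(u-u_h)}{}\NormLeb{\Grad v_h}{}$, using $\NormLeb{\Grad \Smt^\Pl v_h}{} \Cleq \NormLeb{\Grad v_h}{}$ via Proposition~\ref{P:Pl-Pl-2-smoother}, while the remaining term $\int_\Domain \Grad u_h \colon \Grad \Smt^\Pl v_h - \Formveldisc^\Pl(u_h, v_h)$ is exactly the consistency error of Lemma~\ref{L:Pl-Pl-2-consistency} evaluated at $z_h = u_h \in \SPkerdisc^\Pl$, hence $\Cleq \eta\,\NormLeb{\Grad(u - u_h)}{}\NormLeb{\Grad v_h}{}$ since $u \in \SPker$. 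The last term is controlled by $\NormLeb{p - q_h}{}\NormLeb{\Grad v_h}{}$ through Cauchy--Schwarz together with $\NormLeb{\Divdisc v_h}{} \leq \NormLeb{\Div v_h}{} \Cleq \NormLeb{\Grad v_h}{}$. Dividing by $\Normh{v_h} = \NormLeb{\Grad v_h}{}$, taking the supremum, and using $\beta^{-1} \Cleq c$ then yields $\NormLeb{p_h - q_h}{} \Cleq \Visc\,\eta\,\NormLeb{\Grad(u-u_h)}{} + \NormLeb{p - q_h}{}$.

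Finally, I would insert the velocity estimate $\NormLeb{\Grad(u-u_h)}{} \leq c\,\eta^2(\eta-1)^{-1} \inf_{w_h} \NormLeb{\Grad(u-w_h)}{}$ from Theorem~\ref{T:Pl-Pl-2-velocity-error}, apply the triangle inequality $\NormLeb{p-p_h}{} \leq \NormLeb{p-q_h}{} + \NormLeb{p_h - q_h}{}$, add $\Visc\,\NormLeb{\Grad(u-u_h)}{}$ to both sides, and take the infimum over $q_h \in \Polyavg{\Degree-2}{0}$. The factor $\eta$ coming from the consistency error multiplies the $\eta^2(\eta-1)^{-1}$ from the velocity bound, producing the announced constant $\Cqo \Cleq \eta^3/(\eta-1)$. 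I expect the main obstacle to be the bookkeeping of the $\eta$-dependence: one must verify that the extra $\eta$ picked up through Lemma~\ref{L:Pl-Pl-2-consistency} genuinely compounds with the velocity estimate as stated, rather than being absorbable into a smaller power.
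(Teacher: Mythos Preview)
Your proposal is correct and follows essentially the same route as the paper: inf-sup on $p_h-q_h$, rewriting the numerator via the discrete and continuous first equations together with $\Div \Smt^\Pl v_h = \Divdisc v_h$, then splitting the viscous term into a Galerkin part and the consistency error of Lemma~\ref{L:Pl-Pl-2-consistency} at $z_h = u_h$. The only cosmetic difference is that the paper fixes $q_h$ as the $L^2$-projection of $p$ so that $\int_\Domain (p-q_h)\Divdisc v_h$ vanishes outright, whereas you keep $q_h$ arbitrary and bound this term by $\NormLeb{p-q_h}{}\NormLeb{\Grad v_h}{}$; both yield the same final estimate and the same $\eta$-dependence.
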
 

\begin{proof}
Denote by $(u,p)$ and $(u_h, p_h)$ the solutions of problems
\eqref{Stokes-weak} and~\eqref{Stokes-Pl-Pl-2}, respectively, with
load $f \in \SobD{}^2$ and viscosity $\Visc>0$. In view of
Theorem~\ref{T:Pl-Pl-2-velocity-error}, it suffices to bound the
pressure error $\NormLeb{p-p_h}{}$. To this end, let $q_h \in
\Polyavg{\Degree-2}{0}$ be arbitrary and recall that the discrete
divergence $\Divdisc$ is given by~\eqref{divergence-disc}. The inf-sup
stability of the $\Poly{\Degree}/\Poly{\Degree-2}$ pair and
Proposition~\ref{P:Pl-Pl-2-smoother} yield 
\begin{equation*}
\NormLeb{p_h- q_h}{} \leq c
\sup_{v_h \in (\Polybnd{\Degree}{1})^2} \dfrac{\int_\Domain (p_h -
  q_h) \Div \Smt^\Pl v_h}{\NormLeb{\Grad v_h}{}}. 
\end{equation*}  
For all $v_h \in (\Polybnd{\Degree}{1})^2$, a comparison of
\eqref{Stokes-weak} and~\eqref{Stokes-Pl-Pl-2} entails 
\begin{equation*}
\int_\Domain (p_h - q_h) \Div \Smt^\Pl v_h
=
\Visc \left( \Formveldisc^\Pl(u_h, v_h) - \int_\Domain \Grad u \colon
  \Grad \Smt^\Pl v_h \right) + \int_\Domain (p-q_h) \Divdisc v_h 
\end{equation*}	
where we have made use again of
Proposition~\ref{P:Pl-Pl-2-smoother}. The last summand in the
right-hand side vanishes if we let $q_h$ be the $L^2$-orthogonal
projection of $p$. Hence, invoking Lemma~\ref{L:Pl-Pl-2-consistency}
and proceeding as in the proof of
Theorem~\ref{T:Pl-Pl-2-velocity-error}, we infer 
\begin{equation}
\label{Pl-Pl-2-pressure-error}
\NormLeb{p_h-q_h}{} \leq c \Visc \eta \NormLeb{\Grad(u-u_h)}{}.
\end{equation}
The triangle inequality and Theorem~\ref{T:Pl-Pl-2-velocity-error} conclude the proof.
\end{proof}

\subsection{Inhomogeneous continuity equation} 
\label{SS:inhomogeneous-continuity}

It is worth having a look at the case when the incompressibility
constraint $\Div u = 0$ of~\eqref{Stokes-strong} is replaced by the
inhomogeneous continuity condition $\Div u = g$ with $g \in
\LebH{}$. The corresponding weak formulation reads as follows: Find
$u \in \SobH{}^2$ and $p \in \LebH{}$ such that 
\begin{equation}
\label{Stokes-inhom}
\begin{alignedat}{2}
&\forall v \in \SobH{}^2
&\qquad
\Visc \int_\Domain \Grad u \colon \Grad v 
- \int_\Domain p \Div v 
&= \left\langle  f , v \right\rangle  \\
&\forall q \in \SPpres
&\qquad
\int_\Domain q \Div u &= \int_\Domain q g.
\end{alignedat}
\end{equation}

A possible extension of the discretization~\eqref{Stokes-Pl-Pl-2} with the $\Poly{\Degree}/\Poly{\Degree-2}$ pair consists in finding $u_h \in (\Polybnd{\Degree}{1})^2$ and $p_h \in \Polyavg{\Degree-2}{0}$ such that  
\begin{equation}
\label{Stokes-inhom-Pl-Pl-2}
\begin{aligned}
&\forall v_h \in (\Polybnd{\Degree}{1})^2
&\qquad
\Visc \,\Formveldisc^\Pl(u_h, v_h)
-\int_\Domain p_h \Div v_h 
&= \langle  f , \Smt^\Pl v_h \rangle  \\
&\forall q_h \in \Polyavg{\Degree-2}{0}
&\qquad
\int_\Domain q_h \Div u_h &= \int_\Domain q_h g.
\end{aligned}
\end{equation}

The second equations of~\eqref{Stokes-inhom} and
\eqref{Stokes-inhom-Pl-Pl-2} impose $u \in \SPker(g)$ and $u_h \in
\SPkerdisc^{\Pl}(g)$, respectively, where 
\begin{equation*}
\SPker(g) := \{ z \in \SobH{}^2 \mid \Div z = g \},
\qquad
\SPkerdisc^{\Pl}(g) := \{ z_h \in (\Polybnd{\Degree}{1})^2 \mid \Divdisc z_h = \Ritz_{\Degree-2} g  \} 
\end{equation*}
and $\Ritz_{\Degree-2}$ is the $L^2$-orthogonal projection onto
$\Polyavg{\Degree-2}{0}$.  

Lemma~\ref{L:Pl-Pl-2-consistency} states that the consistency error in
the left hand side of~\eqref{Pl-Pl-2-consistency} vanishes whenever
$z_h \in \SPker \cap \SPkerdisc^{\Pl}$. If, instead, we assume $z_h
\in \SPker(g) \cap \SPkerdisc^{\Pl}(g)$ for some $g \in \LebH{}$ with
$g \neq \Ritz_{\Degree-2} g$, the consistency error may not vanish. In fact, we possibly have $\Rightinv_h^\Pl z_h \neq 0$, as a
consequence of Proposition~\ref{P:Pl-Pl-2-smoother}. This suggests
that a bound of the consistency error solely in terms of the best
approximation $H^1$-error to $z_h$ by elements of $\SPker(g)$ is likely not possible. Therefore, we do not expect that the discrete velocity $u_h$ solving~\eqref{Stokes-inhom-Pl-Pl-2} is a near-best approximation of the analytical velocity in $(\Polybnd{\Degree}{1})^2$, with respect to the $H^1$-norm. 

Still, combining the equivalence~\eqref{Pl-Pl-2-smoother-stability} and the $L^2$-orthogonality of $\Ritz_{\Degree-2}$, we obtain the following generalization of Lemma~\ref{L:Pl-Pl-2-consistency} 
\begin{equation*}
\label{Pl-Pl-2-consistency-inhom}
\begin{split}
&\NormSemi{\int_\Domain \Grad z_h \colon \Grad \Smt^\Pl v_h - \Formveldisc^\Pl (z_h, v_h)} \leq \\
&\hspace{1.5cm}\leq c \eta 
\left( \inf_{z \in \SPker(g)} \NormLeb{\Grad(z-z_h)}{}
+ \inf_{q_h \in \Polyavg{\Degree-2}{0}} \NormLeb{g-q_h}{} \right) \NormLeb{\Grad v_h}{}
\end{split}
\end{equation*}
for all $z_h \in \SPkerdisc^{\Pl}(g)$ and $v_h \in (\Polybnd{\Degree}{1})^2$, with $g \in \LebH{}$.
%
Apart from the additional term in the right-hand side of this estimate,
the technique in the proof of Theorem~\ref{T:Pl-Pl-2-velocity-error} can be still applied, with the help of~\cite[Proposition~5.1.3]{Boffi:Brezzi:Fortin.13}, and we finally derive 
\begin{equation}
\label{Pl-Pl-2-velocity-error-inhom}
\NormLeb{\Grad(u-u_h)}{} \Cleq 
\inf_{v_h \in (\Polybnd{\Degree}{1})^2} \NormLeb{\Grad(u-v_h)}{} 
+ \inf_{q_h \in \Polyavg{\Degree-2}{0}} \NormLeb{g-q_h}{}
\end{equation} 
for any fixed $\eta >1$. Similarly as in~\eqref{intro:conforming-est}, here
the approximation power of the discrete pressure space in the $L^2$-norm may impair the velocity $H^1$-error, because the $\Poly{\Degree}/\Poly{\Degree-2}$ pair is unbalanced. We confirm this suspicion by means of a numerical experiment in section~\ref{SS:numerics-inhomogeneous}. Still, we remark that this estimate, unlike~\eqref{intro:conforming-est}, is pressure robust,
i.e. independent of the analytical pressure. A corresponding bound of
the pressure error can be derived arguing as in the proof
of Theorem~\ref{T:Pl-Pl-2-pressure-error}.  

The nonconforming discretization proposed in
section~\ref{SS:nonconforming-pairs} has the remarkable property that
the consistency error can always be bounded solely in terms of the
best approximation $H^1$-error to the analytical velocity; cf.
Remark~\ref{R:CR-inhom-continuity}. Therefore, in that case, we achieve
quasi-optimality and pressure robustness even if an inhomogeneous
continuity condition is imposed.

\section{Generalizations of the paradigmatic discretization} \label{S:generalizations}

The idea illustrated in the previous section can be generalized in
various directions. An immediate observation is that the same
construction applies to any other conforming and inf-sup stable pair
$\SPveldisc/\SPpresdisc$ such that 
\begin{itemize}
	\item[$(i)$] $\Polyavg{0}{0}$ is a subset of $\SPpresdisc$ and
	\item[$(ii)$] the discrete divergence $\Divdisc$ can be
          computed element-wise. 
\end{itemize} 
The first condition is needed in Proposition~\ref{P:Pl-Pl-2-smoother}
to ensure that the smoothing operator
$\Smt^\Pl$ fulfills~\eqref{conservation-divergence}. The second one
guarantees that the divergence correction $\Rightinv^\Pl$ can be computed element-wise. As a consequence, the proposed discretization is computationally
feasible, cf. Remark~\ref{R:Pl-Pl-2-feasibility}. 
%
Conditions (i) and (ii) are verified, for instance, by the following
generalization of the $\Poly{\Degree} / \Poly{\Degree-2}$ pair 
\begin{equation*}
\label{Pl-Pl-2-pair-generalized}
\SPveldisc = (\Polybnd{\Degree}{1})^\Dim
\qquad \text{and} \qquad
\SPpresdisc = \Polyavg{\Degree-k}{0},
\qquad
\Formpresdisc(v_h, q_h) = -\int_\Domain q_h \Div v_h
\end{equation*}
where $\Dim \leq k \leq \Degree$ and $\Dim \in \{2,3\}$. Another
possibility is to consider the conforming Crouzeix-Raviart pairs
described in~\cite[Sections~8.6.2 and
8.7.2]{Boffi:Brezzi:Fortin.13}. Stable pairs with continuous pressure,
i.e. $\SPpresdisc \subseteq C^0(\Domain)$, do not fulfill (i), while
(ii) is violated, for instance, by the modified Hood-Taylor pairs of
Boffi et al.~\cite{Boffi.Cavallini.Gardini.Gastaldi:12}.  

We now aim at addressing more substantial generalizations. We mainly
focus on the necessary modifications and, in particular, we omit all
proofs that are similar to the ones in the previous section. 

\subsection{Nonconforming pairs}
\label{SS:nonconforming-pairs}

Assume that $\SPveldisc/\SPpresdisc$ is a nonconforming pair,
i.e. $\SPveldisc \nsubseteq \SPvel$. In this case, it does not seem appropriate to define the smoothing
operator $\Smt$ as in~\eqref{Pl-Pl-2-smoother}, because of the condition $\Smt(\SPveldisc) \subseteq \SobH{\Domain}^\Dim$. A possible fix for this problem is to replace $v_h $
with $\Smtm v_h$, where $\Smtm: \SPveldisc \to \SPvel$ is a linear operator. To make sure that a counterpart of Proposition~\ref{P:Pl-Pl-2-smoother} holds, we require that $\Div \Smtm v_h$ has element-wise
the same mean as $\Divdisc v_h$ for all $v_h \in \SPveldisc$. Therefore, we resort to a element-wise
"mean mass preserving" operator; cf. Proposition~\ref{P:CR-mean-operator}. 

As before, we illustrate this idea by means of a model example, namely
the two-dimensional nonconforming Crouzeix-Raviart pair of degree
$\Degree \geq 2$. We do not consider the lowest-order case
$\Degree=1$, as it is rather specific and it is already covered by
\cite{Verfuerth.Zanotti:18}, cf. Remark~\ref{R:CR-first-order}. A
similar technique can be applied, for instance, with the modified
Crouzeix-Raviart pairs of~\cite{Matthies.Tobiska:05} or with the
three-dimensional generalizations of the Kouhia-Stenberg pair from 
\cite{Hu.Schedensack:18}. The original two-dimensional pair of Kouhia 
and Stenberg~\cite{Kouhia.Stenberg:95} can be treated as indicated in
Remark~\ref{R:CR-first-order}.  

Let the mesh $\Mesh$ be as in section~\ref{S:paradigmatic-discretization} and denote by $\Faces{}$ the faces of $\Mesh$. A subscript to $\Faces{}$ indicates that we consider only those faces that are contained in the set specified by the subscript. We orient each interior face $F \in
\Faces{\Domain}$ with a normal unit vector $\Normal_F$. We denote by
$\Jump{\cdot}_{|F}$ the jump on $F$ in the direction of
$\Normal_F$. For boundary faces $F \in \Faces{\partial \Domain}$, we
orient $\Normal_F$ so that it points outside $\Domain$ and let
$\Jump{\cdot}_{|F}$ coincide with the trace on $F$,
cf.~\cite[Section~1.2.3]{DiPietro.Ern:12}. 
We use the subscript $\Mesh$ to indicate the broken version of a
differential operator on $\Mesh$. For instance, the broken gradient of
an element-wise $H^1$-function $v$ is given by $(\GradM v)_{|K} :=
\Grad(v_{|K})$ for all $K \in \Mesh$. 

The nonconforming Crouzeix-Raviart space of degree $\Degree \in \N$ on $\Mesh$, with homogeneous boundary conditions, can be defined as follows 
\begin{equation*}
\label{CR-space}
\CR{\Degree} :=
\{ v \in \Polypiec{\Degree}{0}  \mid \forall F \in \Faces{}~\text{and}~ r \in \Poly{\Degree-1}(F) \quad \int_F \Jump{v} r = 0 \}.
\end{equation*}
Notice that the integral $\int_F v$ is well-defined for all $v \in
\CR{\Degree}$ and $F \in \Faces{}$ and vanishes if $F \in
\Faces{\partial \Domain}$. Yet, the jumps on mesh faces are
not vanishing in general. 

We assume hereafter $\Degree \geq 2$. The two-dimensional nonconforming
Crouziex-Raviart pair of degree $\Degree$ is 
\begin{equation*}
\label{CR-pair}
\SPveldisc = (\CR{\Degree})^2 
\qquad \text{and} \qquad
\SPpresdisc = \Polyavg{\Degree-1}{0},
\qquad
\Formpresdisc(v_h, q_h) = -\int_\Domain q_h \DivM v_h.
\end{equation*}
Results concerning the inf-sup stability can be found in
\cite{Baran.Stoyan:07,Crouzeix.Falk:89,Fortin.Soulie:83}. Since the broken divergence $\DivM$ maps $\SPveldisc$ into $\SPpresdisc$, it coincides with the discrete divergence from~\eqref{divergence-disc}, i.e. $\Divdisc = \DivM$. We measure the velocity error in the broken $H^1$-norm, augmented with scaled jumps. Thus, in the notation of section~\ref{S:abstract-framework}, we set 
\begin{equation*}
\label{CR-norm}
\Normh{v}^2 =\Normh[\Cr]{v}^2 :=
\NormLeb{\GradM v}{}^2 + \sum_{F \in \Faces{}} h_F^{-1}
\NormLeb{\Jump{v}}{F}^2,
\end{equation*} where $h_F$ is the diameter of $F$. An
equivalent alternative would be to consider only the broken
$H^1$-norm. Both options extend the $H^1$-norm to $\SobH{}^2 +
(\CR{\Degree})^2$.  

Let $\Nodes{\Degree, \Domain}$ be the set of interior Lagrange nodes
of degree $\Degree$ in $\Mesh$. For all $\nu \in \Nodes{\Degree,
  \Domain}$, we denote by $\Phi_\Degree^\nu$ the Lagrange basis
function of $\Polybnd{\Degree}{1}$ associated with the evaluation at
$\nu$, i.e. $\Phi_\Degree^\nu(\nu') = \delta_{\nu \nu'}$ for all $\nu'
\in \Nodes{\Degree, \Domain}$. Fix also an element $K_\nu \in \Mesh$
with $\nu \in K_\nu$. We define a "simplified nodal averaging"
operator $\Smtavg^\Cr: (\CR{\Degree})^2 \to (\Polybnd{\Degree}{1})^2$ 
by
\begin{equation*}
\label{CR-averaging}
\Smtavg^\Cr v_h :=
\sum_{\nu \in \Nodes{\Degree,\Domain}} v_{h|K_\nu}(\nu)\, \Phi_\Degree^\nu.
\end{equation*}   

Next, let $m_F$ be the midpoint of any interior face $F \in
\Faces{\Domain}$. Consider the bubble function $\Phi_2^F:= 3(2
\NormSemi{F})^{-1}\Phi_2^{m_F}$, where $\Phi_2^{m_F}$ is the Lagrange
basis function of $\Polybnd{2}{1}$ associated with the evaluation at
$m_F$. The normalization implies $\int_{F'} \Phi_2^F = \delta_{FF'}$
for all $F' \in \Faces{}$, according to the Simpson quadrature
formula. We introduce a "bubble" operator $\Smtfac^\Cr:
(\CR{\Degree})^2 \to (\Polybnd{\Degree}{1})^2$ by
\begin{equation*}
\label{CR-bubble-operator}
\Smtfac^\Cr v_h := \sum_{F \in \Faces{\Domain}} \left( \int_Fv_h \right) \Phi_2^F.
\end{equation*}

We combine $\Smtavg^\Cr$ and $\Smtfac^\Cr$ to obtain the announced
element-wise mean mass preserving operator $\Smtm^\Cr$. Roughly speaking, we use
$\Smtfac^\Cr$ to enforce the first part of
\eqref{CR-mean-operator-properties} below, while $\Smtavg^\Cr$ is
responsible for the second part. 

\begin{proposition}[Element-wise mean mass preserving operator]
\label{P:CR-mean-operator}
The linear operator $\Smtm^\Cr: (\CR{\Degree})^2 \to (\Polybnd{\Degree}{1})^2$ given by
\begin{equation}
\label{CR-mean-operator}
\Smtm^\Cr v_h := \Smtavg^\Cr v_h + \Smtfac^\Cr (v_h - \Smtavg^\Cr v_h)
\end{equation}
is such that
\begin{equation}
\label{CR-mean-operator-properties}
\int_K \Div \Smtm^\Cr v_h = \int_K \Div v_h
\quad \text{and} \quad
\Normh[\Cr]{v_h - \Smtm^\Cr v_h} \leq c \inf_{v \in \SobH{}^2} \Normh[\Cr]{v-v_h}
\end{equation}
for all $v_h \in (\CR{\Degree})^2$ and $K \in \Mesh$. 
\end{proposition}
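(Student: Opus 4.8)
The plan is to establish the two properties in~\eqref{CR-mean-operator-properties} separately. For the mean mass preservation, I would apply the divergence theorem on each element: since $\Smtm^\Cr v_h \in (\Polybnd{\Degree}{1})^2 \subseteq \SobH{}^2$ is globally $H^1$ and $v_h$ is element-wise polynomial, both $\int_K \Div \Smtm^\Cr v_h$ and $\int_K \Div v_h$ equal, by the divergence theorem, the integral of the normal trace over $\partial K$. It therefore suffices to prove the stronger face-wise identity $\int_F \Smtm^\Cr v_h = \int_F v_h$ for every $F \in \Faces{}$, where $\int_F v_h$ is the single-valued Crouzeix--Raviart mean (well-defined since $\Jump{v_h}$ is $L^2$-orthogonal to constants for $\Degree \geq 1$). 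Setting $w_h := v_h - \Smtavg^\Cr v_h$, this reduces to $\int_F \Smtfac^\Cr w_h = \int_F w_h$, which is immediate from the normalization $\int_{F'}\Phi_2^F = \delta_{FF'}$: on an interior face one computes $\int_F \Smtfac^\Cr w_h = \sum_{F' \in \Faces{\Domain}} (\int_{F'} w_h)\,\delta_{FF'} = \int_F w_h$, while on a boundary face both sides vanish by the homogeneous boundary conditions encoded in $\CR{\Degree}$ and $\Polybnd{\Degree}{1}$.

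For the approximation estimate I would first reduce the right-hand side to the jump seminorm. For every $v \in \SobH{}^2$ one has $\Jump{v} = 0$, hence $\Jump{v_h} = \Jump{v_h - v}$ and
\[
\sum_{F \in \Faces{}} h_F^{-1}\NormLeb{\Jump{v_h}}{F}^2 = \sum_{F \in \Faces{}} h_F^{-1}\NormLeb{\Jump{v_h-v}}{F}^2 \leq \Normh[\Cr]{v_h - v}^2 .
\]
Taking the infimum over $v$ bounds the jump seminorm by $\inf_{v \in \SobH{}^2}\Normh[\Cr]{v-v_h}$, so it remains to show that $\Normh[\Cr]{v_h - \Smtm^\Cr v_h}$ is controlled by this seminorm.

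Here I would exploit the algebraic identity $v_h - \Smtm^\Cr v_h = (\Id - \Smtfac^\Cr)w_h$. As $\Smtfac^\Cr w_h$ is conforming, its jumps vanish, so the jump part of $\Normh[\Cr]{(\Id-\Smtfac^\Cr)w_h}$ equals $\sum_F h_F^{-1}\NormLeb{\Jump{w_h}}{F}^2 = \sum_F h_F^{-1}\NormLeb{\Jump{v_h}}{F}^2$, exactly the target seminorm. For the gradient part I would split $\NormLeb{\GradM(\Id-\Smtfac^\Cr)w_h}{} \leq \NormLeb{\GradM w_h}{} + \NormLeb{\Grad \Smtfac^\Cr w_h}{}$. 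The first term is the $H^1$-part of the now-classical stability estimate for nodal averaging operators,
\[
\sum_{K \in \Mesh}\big( h_K^{-2}\NormLeb{w_h}{K}^2 + \NormLeb{\GradM w_h}{K}^2 \big) \Cleq \sum_{F \in \Faces{}} h_F^{-1}\NormLeb{\Jump{v_h}}{F}^2 ,
\]
valid because $\Smtavg^\Cr$ reproduces conforming functions and is locally defined (cf.~\cite{DiPietro.Ern:12}). For the second term I would invoke the scaling $\NormLeb{\Grad\Phi_2^F}{} \eqsim h_F^{-1}$, finite overlap of the bubbles, a discrete trace inequality and $\NormSemi{F} \eqsim h_F$ to obtain $\NormLeb{\Grad\Smtfac^\Cr w_h}{}^2 \Cleq \sum_{K \in \Mesh} h_K^{-2}\NormLeb{w_h}{K}^2$, which the displayed estimate again bounds by the jump seminorm. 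Combining the two parts yields $\Normh[\Cr]{v_h - \Smtm^\Cr v_h} \Cleq \big(\sum_F h_F^{-1}\NormLeb{\Jump{v_h}}{F}^2\big)^{1/2} \Cleq \inf_{v \in \SobH{}^2}\Normh[\Cr]{v-v_h}$.

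The main obstacle is the bubble correction $\Smtfac^\Cr w_h$: its face data $\int_F w_h$ must be converted into scaled element $L^2$-norms of $w_h$, which forces one to use the averaging estimate in its full strength, i.e.\ including the scaled $L^2$-bound $\sum_{K} h_K^{-2}\NormLeb{v_h - \Smtavg^\Cr v_h}{K}^2 \Cleq \sum_F h_F^{-1}\NormLeb{\Jump{v_h}}{F}^2$ and not merely its $H^1$-counterpart. A second, minor point is to verify that this estimate still holds for the \emph{simplified} averaging $\Smtavg^\Cr$, which selects one nodal value per node rather than a genuine average; the two properties that the standard proof requires --- reproduction of the conforming subspace and locality of the operator --- are both retained, so the estimate carries over.
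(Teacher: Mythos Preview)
Your proposal is correct and follows essentially the same route as the paper: reduce the mean-preservation to the face identity $\int_F \Smtm^\Cr v_h = \int_F v_h$ via the normalization of the bubbles and then apply the divergence theorem, and for the stability estimate split $v_h-\Smtm^\Cr v_h=(\Id-\Smtfac^\Cr)(v_h-\Smtavg^\Cr v_h)$, control the bubble part by the scaled $L^2$-norm of $v_h-\Smtavg^\Cr v_h$ through the scaling of $\Phi_2^F$ and a trace inequality, and finally bound both the scaled $L^2$- and the broken $H^1$-part of $v_h-\Smtavg^\Cr v_h$ by the jump seminorm via the nodal-averaging estimate. The only cosmetic difference is that the paper proves the averaging estimate explicitly through the pointwise nodal bound $|v_{h|K}(\nu)-\Smtavg^\Cr v_h(\nu)|\Cleq\sum_{F\ni\nu}h_F^{-1/2}\NormLeb{\Jump{v_h}}{F}$, whereas you invoke it as a known result; your remark that the simplified averaging still enjoys the required locality and reproduction properties is the right justification for this.
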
 

\begin{proof}
Let $v_h \in (\CR{\Degree})^2$ and $F' \in \Faces{\Domain}$ be given. The normalization of the functions $\{ \Phi_2^F \}_{F \in \Faces{\Domain}}$ reveals 
\begin{equation*}
\int_{F'} \Smtfac^\Cr (v_h - \Smtavg^\Cr v_h) =
\sum_{F \in \Faces{\Domain}}\int_F (v_h - \Smtavg^\Cr v_h) \delta_{FF'} =
\int_{F'} (v_h - \Smtavg^\Cr v_h).
\end{equation*}
The same identities hold also for boundary faces $F' \in \Faces{\partial \Domain}$, in view of the boundary conditions in $\CR{\Degree}$ and $\Polybnd{\Degree}{1}$. Rearranging terms, we obtain $\int_{F'} \Smtm^\Cr v_h =
\int_{F'} v_h$ for all $F' \in \Faces{}$. Then, for all $K \in \Mesh$, the Gauss theorem yields the first part of~\eqref{CR-mean-operator-properties}
\begin{equation*}
\int_K \Div \Smtm^\Cr v_h = 
\sum_{F' \in \Faces{\partial K}} \int_{F'} \Smtm^\Cr v_h \cdot \Normal_K =
\sum_{F' \in \Faces{\partial K}} \int_{F'} v_h \cdot \Normal_K =
\int_K \Div v_h.
\end{equation*}
A detailed proof of the second part of
\eqref{CR-mean-operator-properties} can be found in
\cite[Section~3]{Veeser.Zanotti:18b}, where a similar, actually more
involved, operator is considered. For this reason, we only sketch the
proof. Let $K \in \Mesh$ be given. Owing to the triangle inequality,
we initially bound $\NormLeb{\Grad(v_h - \Smtavg^\Cr v_h)}{K}$ and
$\NormLeb{\Grad\Smtfac^\Cr(v_h - \Smtavg^\Cr v_h)}{K}$. The scaling of
the functions $\{\Phi_2^F\}_{F \in \Faces{\partial K}}$ and the trace
inequality imply 
\begin{equation}
\label{CR-mean-operator-proof}
\begin{split}
\NormLeb{\Grad(v_h - \Smtavg^\Cr v_h)}{K} &+
\NormLeb{\Grad\Smtfac^\Cr(v_h - \Smtavg^\Cr v_h)}{K}
\\
&\Cleq h_K^{-1} \NormLeb{v_h - \Smtavg^\Cr v_h}{K} + 
\NormLeb{\Grad(v_h - \Smtavg^\Cr v_h)}{K},
\end{split}
\end{equation} 
where $h_K$ is the diameter of $K$. Next, for all $\nu \in
\Nodes{\Degree,K}$, we have $v_{h|K} (\nu) = \Smtavg^\Cr v_h(\nu)$ if
$\nu \in \mathrm{int}(K)$, otherwise $\NormSemi{v_{h|K} (\nu) -
  \Smtavg^\Cr v_h(\nu)} \Cleq \sum_{F \ni \nu} h_F^{-1/2}
\NormLeb{\Jump{v_h}}{F}$, where $F$ varies in $\Faces{}$. This
estimate and the scaling of the Lagrange basis functions entail that
the right-hand side of~\eqref{CR-mean-operator-proof} is bounded by
$\sum_{F \cap K \neq \emptyset} h_F^{-1/2}
\NormLeb{\Jump{v_h}}{F}$. Squaring and summing over all $K \in \Mesh$,
we finally obtain 
\begin{equation*}
\NormLeb{\GradM(v_h - \Smtm^\Cr v_h)}{}^2
\Cleq \sum_{F \in \Faces{}} h_F^{-1} \NormLeb{\Jump{v_h}}{F}^2.
\end{equation*}
We conclude recalling the definition of the norm $\Normh[\Cr]{\cdot}$.
\end{proof}

According to the first part of~\eqref{CR-mean-operator-properties}, we
can now construct a smoothing operator similarly to $\Smt^\Pl$ in
Proposition~\ref{P:Pl-Pl-2-smoother}. Recalling the local
operators $\Rightinv_\Degree^K$ introduced in section~\ref{SS:local-inversion-divergence},
we define $\Smt^\Cr: (\CR{\Degree})^2 \to \SobH{\Domain}^2$ by  
\begin{equation}
\label{CR-smoother}
\Smt^\Cr v_h := \Smtm^\Cr v_h + 
\sum_{K \in \Mesh} \Rightinv_\Degree^K (\DivM v_h - \Div \Smtm^\Cr v_h).
\end{equation}
Owing to the identity $\Divdisc = \DivM$, we see that $\Smt^\Cr$
fulfills condition~\eqref{conservation-divergence}, as a consequence of Propositions~\ref{P:local-right-inverse} and
\ref{P:CR-mean-operator}. Moreover, the stability of the operators
$\Rightinv_\Degree^K$ and the second part of
\eqref{CR-mean-operator-properties} provide a strengthened counterpart
of~\eqref{Pl-Pl-2-smoother-stability} in that, for all $v_h \in
(\CR{\Degree})^2$, we have 
\begin{equation}
\label{CR-smoother-stability}
\Normh[\Cr]{v_h - \Smt^\Cr v_h} \Cleq  \inf_{v \in \SobH{}^2} \Normh[\Cr]{v-v_h}. 
\end{equation}

Next, inspired by the definition of $\Formveldisc^\Pl$ in
\eqref{Pl-Pl-2-bilinear-form} as well as by identity~\eqref{Pl-Pl-2-bilinear-form-revisited}, we introduce the following
bilinear form $\Formveldisc^\Cr$ on $(\CR{\Degree})^2$  
\begin{equation*}
\Formveldisc^\Cr (w_h, v_h) :=
\int_\Domain \Grad \Smt^\Cr w_h \colon \Grad \Smt^\Cr v_h +
(\eta-1) \int_{\Domain} \GradM \Rightinv_h^\Cr w_h \colon \GradM \Rightinv_h^\Cr v_h
\end{equation*}
where $\Rightinv_h^\Cr := (\Smt^\Cr - \Id)$ and $\eta > 1$ is a penalty
parameter. The above-mentioned properties of $\Smt^\Cr$ imply that the
necessary conditions in Lemmas~\ref{L:quasi-optimal-nec} and
\ref{L:quasi-optimal-press-robust-nec} are fulfilled if we set
$\Formveldisc = \Formveldisc^\Cr$ and $\Smt = \Smt^\Cr$.  
%
In this setting, the abstract discretization~\eqref{Stokes-disc} reads as
follows: Find $u_h \in (\CR{\Degree})^2$ and $p_h \in
\Polyavg{\Degree-1}{0}$ such that 
\begin{equation}
\label{Stokes-CR}
\begin{alignedat}{2}
&\forall v_h \in (\CR{\Degree})^2
&\qquad
\Visc \,\Formveldisc^\Cr(u_h, v_h)
-\int_\Domain p_h \DivM v_h 
&= \left\langle  f , \Smt^\Cr v_h \right\rangle  \\
&\forall q_h \in \Polyavg{\Degree-1}{0}
&\qquad
\int_\Domain q_h \DivM u_h &= 0.
\end{alignedat}
\end{equation}

Similarly as $\Formveldisc^\Pl$ in Lemma~\ref{L:Pl-Pl-2-coercivity}, the form $\Formveldisc^\Cr$ is coercive on $(\CR{\Degree})^2$, for $\eta > 1$, with constant $\geq (1-\eta^{-1})$. Moreover, in view of~\eqref{CR-smoother-stability}, we can
estimate the consistency error of~\eqref{Stokes-CR} by the following counterpart of
Lemma~\ref{L:Pl-Pl-2-consistency} 
\begin{equation}
\label{CR-consistency}
\NormSemi{\int_\Domain \GradM w_h \colon \Grad \Smt^\Cr v_h - \Formveldisc^\Cr (w_h, v_h)}
\leq c \eta 
\inf_{w \in \SobH{}^2} \Normh[\Cr]{w-w_h} \Normh[\Cr]{v_h}
\end{equation}  
for all $w_h, v_h \in (\CR{\Degree})^2$. Hence, we conclude that
\eqref{Stokes-CR} is a quasi-optimal and pressure robust
discretization of~\eqref{Stokes-weak} in the norm
$\Normh[\Cr]{\cdot}$ and the constant $\Cqopr$ from Definition~\ref{D:quasi-optimality-press-robust} solely depends on $\eta$ and the shape parameter of $\Mesh$. 

Whenever the pair $(\CR{\Degree})^2/ \Polyavg{\Degree-1}{0}$ is inf-sup stable, an estimate of the pressure $L^2$-error, only in terms of the best approximation errors to the analytical velocity and the analytical pressure, can also be established similarly as in
Theorem~\ref{T:Pl-Pl-2-pressure-error}. Thus, problem
\eqref{Stokes-CR} is also a quasi-optimal discretization of
\eqref{Stokes-weak}. 

Locally supported basis functions of
$\CR{\Degree}$ are described in
\cite[section~3]{Baran.Stoyan:06}. With this basis and the standard
nodal basis of $\Polypiec{\Degree-1}{0}$, we see that
\eqref{Stokes-CR} is computationally feasible in the sense of
Remark~\ref{R:computational-feasibility},
cf. Remark~\ref{R:Pl-Pl-2-feasibility}. 

\begin{remark}[The pair $(\CR{1})^2/\Polyavg{0}{0}$]
\label{R:CR-first-order}
In principle, the approach described for $\Degree \geq 2$ applies also
with $\Degree = 1$, up to observing that $\Rightinv_2^K$ (and not $\Rightinv_1^K$) should be used in
\eqref{CR-smoother}. The point is that, in this case, an element-wise integration
by parts and the identity $\int_{F'} \Smt^\Cr v_h = \int_{F'}
\Smtm^\Cr v_h = \int_{F'} v_h$, with $F' \in \Faces{}$, reveal
$\int_\Domain \GradM w_h \colon \Grad \Rightinv_h^\Cr v_h = 0$ for all
$w_h, v_h \in (\CR{1})^2$. Hence, the form $\Formveldisc^\Cr$ is given by $\Formveldisc^\Cr(w_h, v_h) = \int_\Domain \GradM w_h
\colon \GradM v_h + \eta\int_\Domain \GradM \Rightinv_h^\Cr w_h \colon
\GradM \Rightinv_h^\Cr v_h$, showing that the penalization is actually
not needed. Setting $\eta=0$ annihilates the consistency error and
corresponds to the discretization proposed in
\cite{Verfuerth.Zanotti:18}.  
\end{remark}

\begin{remark}[Inhomogeneous continuity equation]
\label{R:CR-inhom-continuity}
The infimum in the right-hand side of~\eqref{CR-consistency} is taken
over $\SobH{}^2$ and not only over $\SPker$, unlike
Lemma~\ref{L:Pl-Pl-2-consistency}. This prevents the issue pointed out
in section~\ref{SS:inhomogeneous-continuity}. Therefore, the nonconforming
Crouzeix-Raviart pair can be used to design a quasi-optimal and
pressure robust discretization of problem~\eqref{Stokes-inhom} with
the inhomogeneous continuity condition $g \neq 0$. 	 
\end{remark}

\subsection{Conforming pairs with continuous pressure}
\label{SS:continuous-pressure}

Another class of pairs still not covered by our discussion are conforming pairs with continuous pressure. In fact, the following observations obstruct the construction of a smoothing operator as indicated in Proposition~\ref{P:Pl-Pl-2-smoother}.
\begin{itemize}
	\item[$(i)$] Since $\Polyavg{0}{0}$ is not a subspace of $\SPpresdisc$, the identity $\int_K \Divdisc v_h = \int_K \Div v_h$ may fail to hold for some $v_h \in \SPveldisc$ and $K \in \Mesh$.
	\item[$(ii)$] The computation of $\Divdisc$ is likely
          unfeasible in the sense of
          Remark~\ref{R:computational-feasibility}. 
\end{itemize}
Item (i) entails that we cannot correct the divergence element-wise by
means of the operators $\Rightinv_\Degree^K$ from
section~\ref{SS:local-inversion-divergence}. The shape functions of
the lowest-order continuous space $\Polypiec{1}{1}$ suggest to work on
patches of elements sharing a vertex, instead. Item (ii) further
indicates that we should never require a direct computation of
$\Divdisc$. The construction of a quasi-optimal and pressure robust
discretization of the Stokes equations is still possible under these
constraints, but it is more involved than the ones in the
previous sections. We mainly adapt ideas by Lederer 
et al.~\cite{Lederer.Linke.Merdon.Schoberl:17}. 

As an example, we let the mesh $\Mesh$ be as in section~\ref{S:paradigmatic-discretization} and consider the two-dimensional Hood-Taylor pair  
\begin{equation*}
\label{Hood-Taylor-pair}
\SPveldisc = (\Polybnd{\Degree}{1})^2 
\qquad \text{and} \qquad
\SPpresdisc = \Polyavg{\Degree-1}{1},
\qquad
\Formpresdisc(v_h, q_h) = -\int_\Domain q_h \Div v_h
\end{equation*}
with $\Degree \geq 2$. The inf-sup condition~\eqref{inf-sup-disc} holds with $\beta^{-1} \leq c$ under mild assumptions on $\Mesh$, see~\cite{Boffi:94}. The discrete divergence coincides with the $L^2$-orthogonal projection of the analytical divergence onto $\Polyavg{\Degree-1}{1}$. We denote by $\SPkerdisc^\Ht$ the discrete kernel.

Let $\Nodes{} := \Nodes{1}$ denote the set of all vertices of
$\Mesh$. For each $\nu \in \Nodes{}$, let $\Phi_1^\nu$ be the Lagrange
basis function of $\Polypiec{1}{1}$ associated with the evaluation at
$\nu$, i.e. $\Phi_1^\nu(\nu') = \delta_{\nu\nu'}$ for all $\nu' \in
\Nodes{}$. Recall that $\Phi_1^\nu$ is supported on the patch
$\omega_\nu := \{ K \in \Mesh \mid \nu \in K \}$. Consider the
barycentric refinement $\Submesh_\nu$ of $\omega_\nu$, i.e. the mesh
obtained connecting the vertices and the barycenter of any triangle in
$\omega_\nu$, cf. Figure~\ref{F:barycentric-refinement-star}. The
space $\Polypiec{\Degree}{0}(\Submesh_\nu)$ and the subspaces  
\begin{equation*}
\label{local-spaces-star}
\Polybnd{\Degree}{1}(\Submesh_\nu)
\qquad \text{and} \qquad
\Polyavg{\Degree-1}{0}(\Submesh_\nu).
\end{equation*}
are defined on $\Submesh_\nu$ analogously to $\Polypiec{\Degree}{0}$
in~\eqref{elementwise-polynomials} and $\Polybnd{\Degree}{1}$ and
$\Polyavg{\Degree-1}{0}$ in~\eqref{elementwise-polynomials-sub},
respectively. The element-wise local Lagrange interpolant
$\SmtLag_\Degree^\nu : \Polypiec{\Degree}{0}(\Submesh_\nu) \to
\Polypiec{\Degree-1}{0}(\Submesh_\nu)$ is given by  
\begin{equation*}
\label{Hood-Taylor-local-Lagrange}
\SmtLag_\Degree^\nu v := \sum_{K \in \Submesh_\nu}
\sum_{\nu' \in \Nodes{\Degree-1, K} } v_{|K} (\nu') \Phi_{\Degree-1}^{\nu',K}
\end{equation*}
where $\Nodes{\Degree-1, K}$ is the set of Lagrange nodes of degree
$\Degree-1$ in $K$ and $\Phi_{\Degree-1}^{\nu',K}$ is the Lagrange
basis function of $\Poly{\Degree}(K)$ associated with the evaluation
at $\nu'$ and extended to zero outside $K$. Consider also the
simplified local averaging $\Smtavgloc_\Degree^\nu:
\Polypiec{\Degree}{0}(\Submesh_\nu) \to \Polypiec{\Degree-1}{1}$ 
\begin{equation*}
\label{Hood-Taylor-local-averaging}
\Smtavgloc_\Degree^\nu v := 
\sum_{\nu \in \Nodes{\Degree-1}} v_{|{K_\nu}} (\nu) \Phi_{\Degree-1}^\nu
\end{equation*}
where $K_\nu \in \Mesh$ is a fixed element such that $\nu \in K_\nu$
and $v$ is extended to zero outside $\omega_\nu$. As before,
$\Phi_{\Degree-1}^\nu$ denotes the Lagrange basis function of
$\Polybnd{\Degree-1}{1}$ associated with the evaluation at $\nu$. 

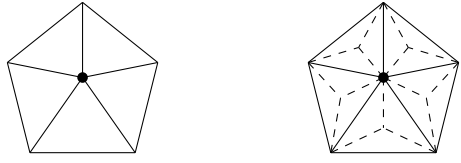
\begin{figure}[ht]
	\centering
	\begin{tikzpicture}
	\coordinate (z1) at (0.5,1.2);
	\coordinate (z2) at (0.8,0);
	\coordinate (z3) at (2.2,0);
	\coordinate (z4) at (2.5,1.2);
	\coordinate (z5) at (1.5,2);
	\coordinate (c) at  (1.5,1);
	\path (z1) edge (z2);
	\path (z2) edge (z3);
	\path (z3) edge (z4);
	\path (z4) edge (z5);
	\path (z5) edge (z1);
	\path (z1) edge (c);
	\path (z2) edge (c);
	\path (z3) edge (c);
	\path (z4) edge (c);
	\path (z5) edge (c);
	\fill (c) circle (2pt);
	\coordinate (w1) at (4.5,1.2);
	\coordinate (w2) at (4.8,0);
	\coordinate (w3) at (6.2,0);
	\coordinate (w4) at (6.5,1.2);
	\coordinate (w5) at (5.5,2);
	\coordinate (cc) at (5.5,1);
	\path (w1) edge (w2);
	\path (w2) edge (w3);
	\path (w3) edge (w4);
	\path (w4) edge (w5);
	\path (w5) edge (w1);
	\path (w1) edge (cc);
	\path (w2) edge (cc);
	\path (w3) edge (cc);
	\path (w4) edge (cc);
	\path (w5) edge (cc);
	\fill (cc) circle (2pt);
	\coordinate (b1) at (4.93,0.73);
	\coordinate (b2) at (5.5,0.33);
	\coordinate (b3) at (6.07,0.73);
	\coordinate (b4) at (5.83,1.4);
	\coordinate (b5) at (5.17,1.4);
	\path[dashed] (w1) edge (b1);
	\path[dashed] (w2) edge (b1);
	\path[dashed] (cc) edge (b1);
	\path[dashed] (w2) edge (b2);
	\path[dashed] (w3) edge (b2);
	\path[dashed] (cc) edge (b2);
	\path[dashed] (w3) edge (b3);
	\path[dashed] (w4) edge (b3);
	\path[dashed] (cc) edge (b3);
	\path[dashed] (w4) edge (b4);
	\path[dashed] (w5) edge (b4);
	\path[dashed] (cc) edge (b4);
	\path[dashed] (w5) edge (b5);
	\path[dashed] (w1) edge (b5);
	\path[dashed] (cc) edge (b5);
	\end{tikzpicture}
	\caption{Generic patch $\omega_\nu$ (left) and barycentric refinement $\Submesh_\nu$ (right).}
	\label{F:barycentric-refinement-star}
\end{figure}

We are now ready to define the operators $\Rightinv_\Degree^\nu: \Leb{} \to \SobH{}^2$ that will be used to correct the divergence in each patch $\omega_\nu$, $\nu \in \Nodes{}$. Here $\Rightinv_\Degree^\nu$ plays the same role as $\Rightinv_\Degree^K$ in section~\ref{SS:local-inversion-divergence}. Given $q \in \Leb{}$, let $u_\nu = u_\nu(q) \in \Polybnd{\Degree}{1}(\Submesh_\nu)^2$ and $p_\nu = p_\nu(q) \in \Polyavg{\Degree-1}{0}(\Submesh_\nu)$ be such that
\begin{equation}
\label{local-problem-div-star}
\begin{aligned}
&\forall v_\nu \in \Polybnd{\Degree}{1}(\Submesh_\nu)^2
\qquad
\int_{\omega_\nu} \Grad u_\nu \colon \Grad v_\nu 
- \int_{\omega_\nu} p_\nu \Div v_\nu = 0\\
&\forall q_\nu \in \Polyavg{\Degree-1}{0}(\Submesh_\nu)
\quad \;
\int_{\omega_\nu} q_\nu \Div u_\nu = 
\int_{\omega_\nu} \left ( \Smtavgloc_\Degree^\nu(q_\nu \Phi_1^\nu) - \SmtLag_\Degree^\nu(q_\nu \Phi_1^\nu) \right )q.
\end{aligned}
\end{equation} 
This problem is uniquely solvable, according to~\cite[Corollary~6.2]{Guzman.Neilan:18}. Then, we set
\begin{equation*}
\label{local-right-inverse-star}
\Rightinv_\Degree^{\nu} q := u_\nu \quad \text{in} \;\; \omega_\nu
\qquad \text{and} \qquad
\Rightinv_\Degree^{\nu} q := 0 \quad \text{in} \;\; \Domain \setminus \omega_\nu.
\end{equation*}

\begin{remark}[Local problems]
\label{R:local-problem-star}
The use of the barycentric refinement $\Submesh_\nu$ is a main
difference compared to~\cite{Lederer.Linke.Merdon.Schoberl:17}. This 
ensures that the pair
$\Polybnd{\Degree}{1}(\Submesh_\nu)^2/
\Polyavg{\Degree-1}{0}(\Submesh_\nu)$ is inf-sup stable. In fact, it
is known that the stability of the Scott-Vogelius pair on $\omega_\nu$ (without the barycentric
refinement) may be impaired if $\nu$ is a singular or nearly singular
vertex, see~\cite{Scott.Vogelius:85}. The partition of unity $\{
\Phi_1^\nu \}_{\nu \in \Nodes{}}$ and the interpolants $\{
\SmtLag_\Degree^\nu \}_{\nu \in \Nodes{}}$ account for the overlapping
of the patches, while the averaging operators $\{
\Smtavgloc_\Degree^\nu \}_{\nu \in \Nodes{}}$ are used to avoid a
direct computation of the discrete divergence
in~\eqref{Hood-Taylor-divergence-correction}. 
\end{remark}

We define a global divergence correction $\Rightinv_h^\Ht: (\Polybnd{\Degree}{1})^2 \to \SobH{}^2$ 
\begin{equation}
\label{Hood-Taylor-divergence-correction}
\Rightinv_h^\Ht v_h := 
\sum_{\nu \in \Nodes{} } \Rightinv_\Degree^\nu \Div v_h.
\end{equation}
In contrast to $\Smt^\Pl$ and $\Smt^\Cr$ from~\eqref{Pl-Pl-2-smoother}
and~\eqref{CR-smoother}, respectively, we now make use of a smoothing
operator $\Smt^\Ht$ which is not guaranteed to be
divergence-preserving, i.e.~\eqref{conservation-divergence} may fail
to hold. We shall see, however, that it still
satisfies the necessary conditions in
Lemmas~\ref{L:quasi-optimal-nec}
and~\ref{L:quasi-optimal-press-robust-nec}. In the following
proposition we only prove a basic stability estimate, for the sake of
simplicity. 

\begin{proposition}[Smoothing operator for the Hood-Taylor pair]
\label{P:Hood-Taylor-smoother}
The linear operator $\Smt^\Ht: (\Polybnd{\Degree}{1})^2 \to \SobH{}^2$ given by 
\begin{equation*}
\label{Hood-Taylor-smoother}
\Smt^\Ht v_h := v_h + \Rightinv_h^\Ht v_h
\end{equation*}
satisfies~\eqref{quasi-optimal-nec-div} and~\eqref{quasi-optimal-press-robust-nec-div} and is such that, for all $v_h \in (\Polybnd{\Degree}{1})^2$,
\begin{equation}
\label{Hood-Taylor-smoother-stability}
\NormLeb{\Grad(v_h - \Smt^\Ht v_h)}{}
\leq c \NormLeb{\Div v_h}{}.
\end{equation}
\end{proposition}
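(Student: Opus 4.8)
The plan is to prove three separate claims about $\Smt^\Ht$: the stability estimate \eqref{Hood-Taylor-smoother-stability}, the divergence condition \eqref{quasi-optimal-nec-div}, and the pressure-robust divergence condition \eqref{quasi-optimal-press-robust-nec-div}. All three hinge on understanding the local correctors $\Rightinv_\Degree^\nu$ and the partition-of-unity structure encoded in \eqref{Hood-Taylor-divergence-correction}. First I would establish the stability bound. Since $\Rightinv_h^\Ht v_h = \sum_{\nu \in \Nodes{}} \Rightinv_\Degree^\nu \Div v_h$ and each $\Rightinv_\Degree^\nu$ is supported on the patch $\omega_\nu$, the sum has finite overlap controlled by the shape parameter. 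For a single patch, the solvability of the local saddle-point problem \eqref{local-problem-div-star} together with the inf-sup stability of the Scott--Vogelius pair on the barycentric refinement $\Submesh_\nu$ (guaranteed by \cite[Corollary~6.2]{Guzman.Neilan:18}) yields $\NormLeb{\Grad u_\nu}{\omega_\nu} \Cleq \NormLeb{\Div v_h}{\omega_\nu}$, because the right-hand side of the second equation in \eqref{local-problem-div-star} is controlled by $\NormLeb{\Div v_h}{\omega_\nu}$ up to the bounded operators $\Smtavgloc_\Degree^\nu$ and $\SmtLag_\Degree^\nu$. Summing over $\nu$ and invoking finite overlap gives \eqref{Hood-Taylor-smoother-stability}.

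Next I would verify \eqref{quasi-optimal-nec-div}, namely that $\int_\Domain p(\Divdisc v_h - \Div \Smt^\Ht v_h) = 0$ for all $p \in \SPpresdisc = \Polyavg{\Degree-1}{1}$. The key computation is to show $\Div \Rightinv_h^\Ht v_h$, tested against any continuous pressure $p$, exactly compensates the difference $\Divdisc v_h - \Div v_h$. Using the second equation of \eqref{local-problem-div-star} with the specific right-hand side involving $\Smtavgloc_\Degree^\nu(q_\nu \Phi_1^\nu) - \SmtLag_\Degree^\nu(q_\nu \Phi_1^\nu)$, I would compute $\int_{\omega_\nu} p \Div \Rightinv_\Degree^\nu \Div v_h$ by choosing $q_\nu$ appropriately as the local representative of $p$. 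Summing over $\nu$ and exploiting the partition of unity $\sum_\nu \Phi_1^\nu = 1$ should telescope the averaging and interpolation operators so that the net effect reproduces precisely the $L^2$-projection onto $\Polyavg{\Degree-1}{1}$, which is $\Divdisc$. The consistency of the averaging operator $\Smtavgloc_\Degree^\nu$ with the global Lagrange structure is what makes this work without ever computing $\Divdisc$ directly.

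Finally, \eqref{quasi-optimal-press-robust-nec-div} asserts $\Smt^\Ht(\SPkerdisc^\Ht) \subseteq \SPker$, i.e. that discretely divergence-free functions are mapped to exactly divergence-free ones. Given $v_h \in \SPkerdisc^\Ht$, one has $\Divdisc v_h = 0$, so $\Div v_h$ is $L^2$-orthogonal to $\Polyavg{\Degree-1}{1}$. I would show $\Div \Smt^\Ht v_h = 0$ by testing against an arbitrary $q \in \Leb{}$ and using \eqref{quasi-optimal-nec-div} for the continuous-pressure part, while the zero-mean orthogonality together with the design of the local problems handles the remaining component. Since $\Div \Smt^\Ht v_h \in \Leb{}$ and it is orthogonal to both $\Polyavg{\Degree-1}{1}$ and its complement in the relevant sense, it must vanish.

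The main obstacle will be the second step, the careful bookkeeping in \eqref{quasi-optimal-nec-div}. The interplay among the three local operators $\SmtLag_\Degree^\nu$, $\Smtavgloc_\Degree^\nu$, and the partition of unity $\{\Phi_1^\nu\}$, combined with the overlapping patches, makes it delicate to confirm that the corrections assemble into exactly the right global quantity. The crucial insight is that the specific combination $\Smtavgloc_\Degree^\nu(\cdot\,\Phi_1^\nu) - \SmtLag_\Degree^\nu(\cdot\,\Phi_1^\nu)$ in \eqref{local-problem-div-star} is engineered so that, after summation over $\nu$, the locally averaged contributions coincide with the global $L^2$-orthogonal projection, thereby reproducing $\Divdisc$ without its explicit evaluation. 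Verifying this requires tracking how each patch sees the continuous pressure on its constituent elements and confirming the telescoping; the stability and the two divergence conditions then follow by comparatively routine arguments.
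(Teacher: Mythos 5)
Your overall architecture (three separate claims, local problems plus partition of unity, finite overlap for the stability bound) mirrors the paper, and your first step is essentially the paper's stability proof: local saddle-point stability from the inf-sup condition of \cite[Corollary~6.2]{Guzman.Neilan:18}, $L^2$-boundedness of $\Smtavgloc_\Degree^\nu$ and $\SmtLag_\Degree^\nu$ to control the right-hand side functional of \eqref{local-problem-div-star}, then summation over the patches. The problems start with \eqref{quasi-optimal-nec-div}. The mechanism you propose there --- a telescoping over patches that assembles the local corrections into the global $L^2$-projection, which you flag as the main obstacle --- is not what happens, and nothing of the sort is needed: since a test pressure $q_h \in \Polyavg{\Degree-1}{1}$ is continuous, each product $q_h\Phi_1^\nu$ is continuous, and the interpolant $\SmtLag_\Degree^\nu$ and the averaging $\Smtavgloc_\Degree^\nu$ coincide on continuous functions; hence every local right-hand side in \eqref{local-problem-div-star} vanishes \emph{individually}, giving $\int_\Domain q_h \Div \Rightinv_h^\Ht v_h = 0$ term by term, and \eqref{quasi-optimal-nec-div} follows because $\Divdisc$ is the $L^2$-projection of $\Div$ onto $\SPpresdisc$. (One further observation you omit: the second equation of \eqref{local-problem-div-star} must first be extended from $\Polyavg{\Degree-1}{0}(\Submesh_\nu)$ to all of $\Polypiec{\Degree-1}{0}(\Submesh_\nu)$, which holds because both sides vanish on constants; otherwise you may not test with the restriction of $q_h$, which has no reason to have zero mean on $\omega_\nu$.) So the step you single out as the delicate one is in fact the easy one.

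The genuine gap is your third step. From \eqref{quasi-optimal-nec-div} and $\Divdisc z_h = 0$ you obtain only that $\Div\Smt^\Ht z_h$ is $L^2$-orthogonal to the finite-dimensional space $\Polyavg{\Degree-1}{1}$, which is far from implying $\Div\Smt^\Ht z_h = 0$; and your plan to ``test against an arbitrary $q \in \Leb{}$'' cannot be executed, because the local problems \eqref{local-problem-div-star} give control only over test pressures that are piecewise polynomial on $\Submesh_\nu$ --- orthogonality ``to the complement in the relevant sense'' is precisely what has no supporting mechanism in your argument. The paper instead tests with the specific function $q_h := \Div\Smt^\Ht z_h$ itself, which is admissible because it is element-wise in $\Poly{\Degree-1}$ on the barycentric refinement. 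The local problems then split $\int_\Domain q_h \Div\Rightinv_h^\Ht z_h$ into averaging contributions $\Smtavgloc_\Degree^\nu(q_h\Phi_1^\nu)$, which are continuous discrete pressures and hence orthogonal to $\Div z_h$ because $z_h \in \SPkerdisc^\Ht$ (and $\int_\Domain \Div z_h = 0$), and interpolation contributions, which satisfy the reproduction identity $\sum_{\nu \in \Nodes{}} \SmtLag_\Degree^\nu(q_h\Phi_1^\nu) = q_h$ by the partition of unity combined with the exactness of element-wise degree-$(\Degree-1)$ interpolation on element-wise polynomials of degree $\Degree-1$. This yields $\NormLeb{q_h}{}^2 = \int_\Domain q_h \Div z_h - \int_\Domain q_h \Div z_h = 0$: the conclusion comes from \emph{self}-orthogonality, not from orthogonality to $\SPpresdisc$ plus a complement. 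Without the choice $q_h = \Div\Smt^\Ht z_h$ and the reproduction identity, your argument for \eqref{quasi-optimal-press-robust-nec-div} does not close.
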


\begin{proof}
For all $v_h \in (\Polybnd{\Degree}{1})^2$ and $q_h \in \Polyavg{\Degree-1}{1}$, we have
\begin{equation*}
\int_\Domain q_h \Div \Rightinv_h^\Ht v_h =
\sum_{\nu \in \Nodes{}} \int_{\omega_\nu}
( \Smtavgloc_\Degree^\nu (q_h \Phi_1^\nu) - \SmtLag_\Degree^\nu (q_h \Phi_1^\nu) )\Div v_h = 0.
\end{equation*}
The first identity follows from the second equation of
\eqref{local-problem-div-star}, which actually holds for all $q_\nu$
in $\Polypiec{\Degree-1}{0}(\Submesh_\nu)$ (and not only in
$\Polyavg{\Degree-1}{0}(\Submesh_\nu)$), as both sides vanish if
$q_\nu$ is constant. To check the second identity, observe that
$\Smtavgloc_\Degree^\nu (q_h \Phi_1^\nu) = \SmtLag_\Degree^\nu (q_h
\Phi_1^\nu)$ for all $\nu \in \Nodes{}$, due to the continuity of $q_h
\Phi_1^\nu$. Thus, we derive the identity 
\begin{equation*}
\int_\Domain q_h \Div \Smt^\Ht v_h = \int_\Domain q_h \Div v_h
\end{equation*}
showing that condition~\eqref{quasi-optimal-nec-div} holds. Next, let
$z_h \in \SPker^\Ht$ be given and consider $q_h = \Div \Smt^\Ht z_h
$. Recall that $\{ \Phi_1^\nu \}_{\nu \in \Nodes{}}$ is a partition of
unity and extend $\SmtLag_\Degree^\nu(q_h \Phi_1^\nu)$ to zero outside
$\omega_\nu$. We infer $\sum_{\nu \in \Nodes{}}
\SmtLag_\Degree^\nu(q_h \Phi_1^\nu) = q_h$. 
Then, since $z_h$ is discretely divergence-free, we have  
\begin{equation*}
\NormLeb{q_h}{}^2 =
\int_\Domain q_h \Div z_h -  \sum_{\nu \in \Nodes{}} \int_\Domain \SmtLag_\Degree^\nu(q_h \Phi_1^\nu) \Div z_h = 0.
\end{equation*}
This reveals $\Div \Smt^\Ht z_h = 0$ and confirms that condition~\eqref{quasi-optimal-press-robust-nec-div} holds. Finally, owing to the stability of $\SmtLag_\Degree^\nu$ and $\Smtavgloc_\Degree^\nu$ in the $L^2(\omega_\nu)$-norm, we infer 
\begin{equation*}
\sup_{q_\nu \in \Polyavg{\Degree-1}{0}(\Submesh_\nu)}
\dfrac{\int_{\omega_\nu} \left ( \Smtavgloc_\Degree^\nu(q_\nu \Phi_1^\nu) - \SmtLag_\Degree^\nu(q_\nu \Phi_1^\nu) \right ) \Div v_h}{\NormLeb{q_\nu}{{\omega_\nu}}}
\leq c \NormLeb{\Div v_h}{\omega_\nu}
\end{equation*}
for all $\nu \in \Nodes{}$ and $v_h \in
(\Polybnd{\Degree}{1})^2$. This entails $\NormLeb{\Grad
  \Div \Rightinv_\Degree^\nu v_h}{\omega_\nu} \Cleq \NormLeb{\Div
  v_h}{\omega_\nu}$, owing to
\cite[Corollary~4.2.1]{Boffi:Brezzi:Fortin.13} and the inf-sup
stability of the pair $\Polybnd{\Degree}{1}(\Submesh_\nu)^2/
\Polyavg{\Degree-1}{0}(\Submesh_\nu)$ stated in
\cite[Corollary~6.2]{Guzman.Neilan:18}. The definition of
$\Rightinv_h^\Ht$ in~\eqref{Hood-Taylor-divergence-correction} then
implies 
\begin{equation*}
\NormLeb{\Grad \Rightinv_h^\Ht v_h}{K} 
\Cleq
\sum_{K' \cap K \neq \emptyset} \NormLeb{\Div v_h}{K'} 
\end{equation*}
for all $K \in \Mesh$, where $K'$ varies in $\Mesh$. We conclude summing over all elements of $\Mesh$ and recalling the definition of $\Smt^\Ht$.  
\end{proof}

Next, for $\eta > 1$, we introduce the following bilinear form on $(\Polybnd{\Degree}{1})^2$
\begin{equation*}
\label{Hood-Taylor-bilinear-form}
\Formveldisc^\Ht(w_h, v_h) :=
\int_\Domain \Grad \Smt^\Ht w_h \colon \Grad \Smt^\Ht v_h +
(\eta-1) \int_\Domain \Grad \Rightinv_h^\Ht w_h \colon \Grad \Rightinv_h^\Ht v_h.
\end{equation*}
The abstract discretization~\eqref{Stokes-disc} with $\Formveldisc = \Formveldisc^\Ht$ and $\Smt = \Smt^\Ht$ looks for $u_h \in (\Polybnd{\Degree}{1})^2$ and $p_h \in \Polyavg{\Degree-1}{1}$ such that
\begin{equation}
\label{Stokes-Hood-Taylor}
\begin{alignedat}{2}
&\forall v_h \in (\Polybnd{\Degree}{1})^2
&\qquad
\Visc \,\Formveldisc^\Ht(u_h, v_h)
-\int_\Domain p_h \Div v_h 
&= \left\langle  f , \Smt^\Ht v_h \right\rangle  \\
&\forall q_h \in \Polyavg{\Degree-1}{1}
&\qquad
\int_\Domain q_h \Div u_h &= 0.
\end{alignedat}
\end{equation}

This discretization is computationally feasible in the sense of
Remark~\ref{R:computational-feasibility},
cf. Remark~\ref{R:Pl-Pl-2-feasibility}. Yet, the implementation is
more costly than the one of~\eqref{Stokes-Pl-Pl-2}
and~\eqref{Stokes-CR} because, in general, we cannot resort to one
reference configuration for the solution of the local
problems~\eqref{local-problem-div-star}. The error analysis
of~\eqref{Stokes-Hood-Taylor} proceeds almost verbatim as in
section~\ref{SS:error-estimates}, with the help of
Proposition~\ref{P:Hood-Taylor-smoother}. The only remarkable
difference is that estimate~\eqref{Pl-Pl-2-pressure-error} in the
proof of Theorem~\ref{T:Pl-Pl-2-pressure-error} should be replaced by
the weaker one $\NormLeb{p_h-q_h}{} \Cleq \Visc \eta
\NormLeb{\Grad(u-u_h)}{} + \NormLeb{p-q_h}{}$, because
identity~\eqref{conservation-divergence} may fail to hold.

\section{Numerical experiments with the unbalanced $\Poly{2}/\Poly{0}$ pair} 
\label{S:numerics}

In this section we restrict our attention to the two-dimensional Stokes equations, with unit viscosity, posed in the unit square. In the notation of section~\ref{S:abstract-framework}, this corresponds to 
\begin{equation*}
\label{numerics-setting}
\Dim = 2 \qquad \qquad \Visc = 1 \qquad \qquad \Domain = (0,1)^2.
\end{equation*}
We investigate numerically the new discretization~\eqref{Stokes-Pl-Pl-2}, based on the unbalanced $\Poly{2}/\Poly{0}$ pair, i.e. 
\begin{equation*}
\label{P2-P0-pair}
\SPveldisc = (\Polybnd{2}{1})^2 
\qquad \text{and} \qquad
\SPpresdisc = \Polyavg{0}{0},
\qquad
\Formpresdisc(v_h, q_h) = -\int_\Domain q_h \Div v_h.
\end{equation*}
If not specified differently, the penalty parameter is set to 
\begin{equation*}
\label{numerics-penalty}
\eta = 2.
\end{equation*}

We shall consider the following families $(\Mesh_N^D)_{N \in \N_0}$ and $(\Mesh_N^C)_{N \in \N_0}$ of triangular meshes of $\Domain$. For $N \in \N_0$, we divide $\Domain$ into $2^N \times 2^N$ identical squares, with edges parallel to the $x_1$- and $x_2$-axis and with area $2^{-2N}$. We obtain the "diagonal mesh" $\Mesh_N^D$ dividing each square by the diagonal with positive slope. Similarly, we obtain the "crisscross mesh" $\Mesh_N^C$ drawing both diagonals of each square, cf. Figure~\ref{F:meshes}. All experiments have been implemented in ALBERTA 3.0~\cite{Heine.Koester.Kriessl.Schmidt.Siebert,Schmidt.Siebert:05}.
 
\begin{figure}[htp]
	\hfill
	\subfloat{\includegraphics[width=0.4\hsize]{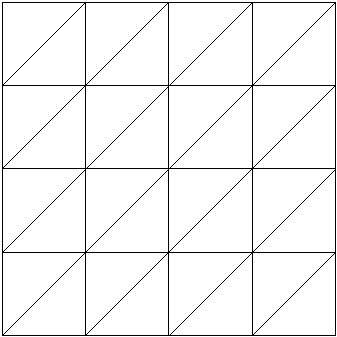}}
	\hfill
	\subfloat{\includegraphics[width=0.4\hsize]{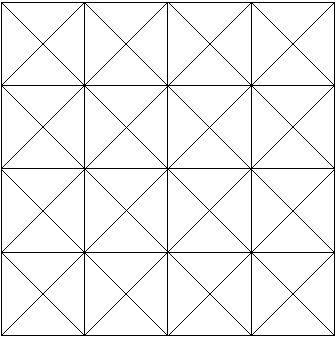}}
	\hfill
	\caption{Diagonal mesh $\Mesh_N^D$ (left) and crisscross mesh $\Mesh_N^C$ (right) with $N=2$.}
	\label{F:meshes}
\end{figure}

\subsection{Smooth solution}
\label{SS:numerics-smooth}

To illustrate the quasi-optimality and pressure robustness of the new $\Poly{2}/\Poly{0}$ discretization, we first consider a test case with smooth analytical solution, given by
\begin{equation*}
u(x_1, x_2) = \Curl(x_1^2(1-x_1)^2x_2^2(1-x_2)^2 )
\qquad
p(x_1, x_2) = \sin(2\pi x_1) \sin(2\pi x_2)
\end{equation*}
where $\Curl(w) := (\partial_2 w, -\partial_1 w)$. We compare the performances of the standard $\Poly{2}/\Poly{0}$ discretization~\eqref{Stokes-Pl-Pl-2-standard} and the new one~\eqref{Stokes-Pl-Pl-2} on the crisscross meshes $\Mesh_N^C$ with $N=0, \dots, 8$. Figure~\ref{F:smooth} displays the respective balances of velocity $H^1$-error and pressure $L^2$-error versus $\#\Mesh_N^C$, that is the number of triangles in the mesh. 

We first observe that the pressure $L^2$-errors of both discretizations behave quite similarly and converge to zero with the maximum decay rate $(\#\Mesh_N^C)^{-0.5}$. The velocity $H^1$-error of the standard discretization converges to zero with the same decay rate, as suggested by estimate~\eqref{intro:conforming-est}, according to the approximation power of the discrete pressure space in the $L^2$-norm. Note, however, that such rate is suboptimal with respect to the approximation power of the discrete velocity space in the $H^1$-norm. In contrast, the velocity $H^1$-error of the new discretization exhibits the maximum decay rate $(\#\Mesh_N^C)^{-1}$, as predicted by Theorem~\ref{T:Pl-Pl-2-velocity-error}. The next experiments are intended to highlight some of the ingredients that contribute to make this optimal-order convergence possible.

\begin{figure}[htp]
	\hfill
	\subfloat{\includegraphics[width=0.5\hsize]{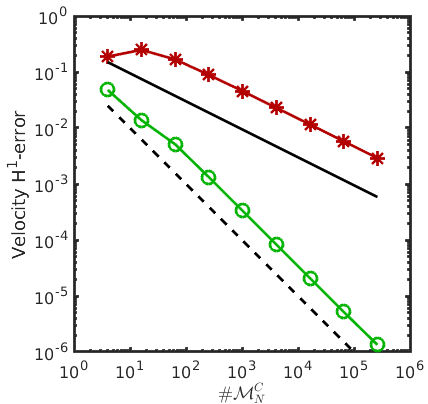}}
	\hfill
	\subfloat{\includegraphics[width=0.5\hsize]{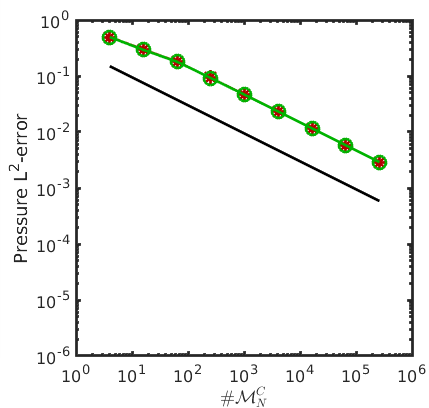}}
	\hfill
	\caption{Test case \S\ref{SS:numerics-smooth}. Velocity $H^1$-error (left) and pressure $L^2$-error (right) of standard ($*$) and new ($\circ$) $\Poly{2}/\Poly{0}$ discretizations. Plain and dashed lines indicate decay rates $(\#\Mesh_N^C)^{-0.5}$ and $(\#\Mesh_N^C)^{-1}$, respectively.}
	\label{F:smooth}
\end{figure}

\subsection{Composite numerical quadrature}
\label{SS:numerics-quadrature}

The evaluation of the duality $\left\langle f, \Smt v_h\right\rangle $, $v_h \in (\Polybnd{2}{1})^2$, in the new $\Poly{2}/\Poly{0}$ discretization requires, in particular, the evaluation of $\left\langle f, \widetilde{v}_h\right\rangle$ for test functions $\widetilde{v}_h$ that are element-wise quadratic on the barycentric refinement of the mesh at hand. This suggests that, for each triangle $K$ in the mesh, a composite quadrature rule, based on the barycentric refinement of $K$, should be used. If one, instead, uses a standard quadrature rule in $K$, the resulting quadrature error could be not negligible, due to the low regularity of $\widetilde{v}_h$. Moreover, since the quadrature error is potentially not pressure robust, as pointed out in~\cite[section ~6.2]{Linke.Merdon.Neilan.Neumann:18}, this may even affect the decay rate of the velocity $H^1$-error. 

To illustrate such effect, we consider a test case with analytical solution
\begin{equation*}
u(x_1, x_2) = \Curl(x_1^2(1-x_1)^2x_2^2(1-x_2)^2 )
\qquad
p(x_1, x_2) = \alpha\sin(2\pi x_1) \sin(2\pi x_2).
\end{equation*}
For $\alpha \in \{1, 10^3\}$, we apply the new $\Poly{2}/\Poly{0}$ discretization on the crisscross meshes $\Mesh_N^C$ with $N=0,\dots,8$. We assemble the right-hand side both with a composite and a standard quadrature rule of degree $6$. For $N=4,\dots, 8$, the corresponding velocity $H^1$-errors are reported in Table~\ref{F:quadrature}. In each case, we compute also the so-called experimental order of convergence (EOC), defined as 
\begin{equation*}
\label{EOC}
\mathrm{EOC}_N 
:= 
\frac{\log(e_N / e_{N-1})}{\log(\#\Mesh_{N-1}^C / \#\Mesh_N^C)} 
=
\frac{\log(e_{N-1} / e_N)}{\log 4}
\end{equation*}   
where $e_N$ denotes the $H^1$-error on $\Mesh_N^C$. 

When the composite quadrature rule is applied, the results seem
insensitive to the parameter $\alpha$ and we observe the maximum decay
rate $(\#\Mesh_N^C)^{-1}$. In contrast, the use of the standard
quadrature rule impairs the pressure robustness stated in
Theorem~\ref{T:Pl-Pl-2-velocity-error}. In fact, for sufficiently
large $N$, the velocity $H^1$-error is essentially proportional to
$\alpha$ and exhibits the suboptimal decay rate
$(\#\Mesh_N^C)^{-0.5}$.  

\begin{table}[htp]
	\begin{minipage}[c]{0.49\linewidth}
		\centering
		\begin{tabular}{|r|c|c|}
			&
			$\alpha = 1$ &
			$\alpha = 10^3$ \\
			N &
			$H^1$-error  \hspace{1pt} EOC &
			$H^1$-error  \hspace{1pt} EOC \\[1ex]
			\hline
			&&
			\\[-1.5ex]
			4 & 3.32e-04 \hspace{27pt} 
			  & 3.32e-04 \hspace{27pt} 
			\\
			5 & 8.31e-05 \hspace{5pt} \raisebox{1.5ex}[0pt]{1.00} 
			  & 8.31e-05 \hspace{5pt} \raisebox{1.5ex}[0pt]{1.00}
			\\
			6 & 2.08e-05 \hspace{5pt} \raisebox{1.5ex}[0pt]{1.00} 
			  & 2.08e-05 \hspace{5pt} \raisebox{1.5ex}[0pt]{1.00}
			\\
			7 & 5.19e-06 \hspace{5pt} \raisebox{1.5ex}[0pt]{1.00} 
			  & 5.19e-06 \hspace{5pt} \raisebox{1.5ex}[0pt]{1.00}
			\\
			8 & 1.30e-06 \hspace{5pt} \raisebox{1.5ex}[0pt]{1.00} 
			  &	1.30e-06 \hspace{5pt} \raisebox{1.5ex}[0pt]{1.00}
		\end{tabular}
	\end{minipage}
	\hfill
\begin{minipage}[c]{0.49\linewidth}
	\centering
	\begin{tabular}{|r|c|c|}
		&
		$\alpha = 1$ &
		$\alpha = 10^3$\\
		N &
		$H^1$-error  \hspace{1pt} EOC &
		$H^1$-error  \hspace{1pt} EOC \\[1ex]
		\hline
		&&
		\\[-1.5ex]
		4 & 3.57e-04 \hspace{27pt} 
		  & 1.29e-01 \hspace{27pt} 
		\\
		5 & 1.07e-04 \hspace{5pt} \raisebox{1.5ex}[0pt]{0.87} 
		  & 6.72e-02 \hspace{5pt} \raisebox{1.5ex}[0pt]{0.47}
		\\
		6 & 4.01e-05 \hspace{5pt} \raisebox{1.5ex}[0pt]{0.71} 
		  & 3.41e-02 \hspace{5pt} \raisebox{1.5ex}[0pt]{0.49}
		\\
		7 & 1.80e-05 \hspace{5pt} \raisebox{1.5ex}[0pt]{0.58} 
		  & 1.71e-02 \hspace{5pt} \raisebox{1.5ex}[0pt]{0.50}
		\\
		8 & 8.72e-06 \hspace{5pt} \raisebox{1.5ex}[0pt]{0.52} 
		  & 8.57e-03 \hspace{5pt} \raisebox{1.5ex}[0pt]{0.50}
	\end{tabular}
\end{minipage}
	\vspace{1ex}
	\caption{Test case \S\ref{SS:numerics-quadrature}. Velocity
          $H^1$-errors of the new $\Poly{2}/\Poly{0}$ discretization
          and corresponding EOCs with composite (left) or standard
          (right) quadrature rules for $\alpha \in \{1, 10^3\}$.} 
	\label{F:quadrature}
\end{table}

\subsection{Locking}
\label{SS:numerics-locking}

As mentioned in Remark~\ref{R:connection-DG}, the bilinear form
$\Formveldisc^\Pl$ in the new $\Poly{2}/\Poly{0}$ discretization has
the same structure as the DG-SIP form of~\cite{Arnold:82}. Still, one
main difference is that Lemma~\ref{L:Pl-Pl-2-coercivity} ensures the
coercivity of the former for any penalty $\eta>1$ (and not only for
sufficiently large $\eta$). Moreover, the coercivity constant is $\geq
0.5$ for $\eta = 2$. Having an explicit and safe choice of the penalty
parameter is particularly useful in this context, because we may have
locking for large $\eta$, in view of Remark~\ref{R:locking}. 

To illustrate this, we consider a test case with analytical solution 
\begin{equation*}
u(x_1, x_2) = \Curl(x_1^2(1-x_1)^2x_2^2(1-x_2)^2 )
\qquad
p(x_1, x_2) = (x_1 - 0.5)(x_2-0.5).
\end{equation*}
We apply the new $\Poly{2}/\Poly{0}$ discretization for $\eta \in \{
2, 32, 512 \}$ both on diagonal meshes $\Mesh_N^D$ and on crisscross
meshes $\Mesh_N^D$, with $N=0, \dots, 7$.  
%
The velocity $H^1$-errors displayed in the right part of
Figure~\ref{F:locking} indicate that the new discretization is robust
with respect to $\eta$ on crisscross meshes. This follows from the
fact that condition~\eqref{best-errors-Pl-Pl-2} in
Remark~\ref{R:locking} holds for such meshes, as a consequence
of~\cite[Theorem~4.3.1]{Qin:1994}. In contrast, adopting the
terminology 
of~\cite{Babuska.Suri:92b}, we observe on the left part of
Figure~\ref{F:locking} locking of order $(\Mesh_N^D)^{1/2}$ when
diagonal meshes are used.

\begin{figure}[htp]
	\hfill
	\subfloat{\includegraphics[width=0.5\hsize]{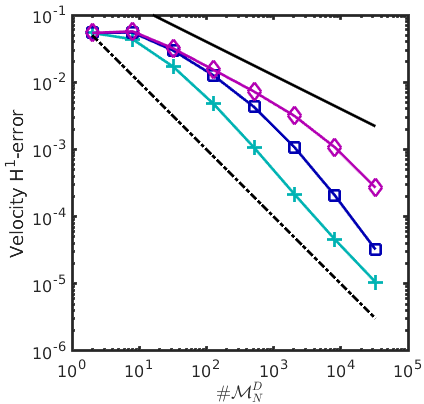}}
	\hfill
	\subfloat{\includegraphics[width=0.5\hsize]{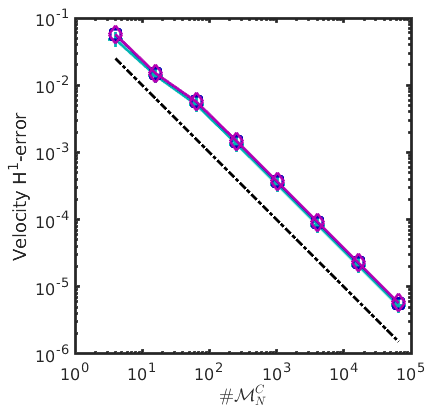}}
	\hfill
	\caption{Test case \S\ref{SS:numerics-locking}. Velocity
          $H^1$-error of the new $\Poly{2}/\Poly{0}$ discretization on
          diagonal (left) and crisscross (right) meshes, for $\eta =
          2$ ($+$), $\eta = 32$ ($\square$) and $\eta = 512$
          ($\Diamond$). Plain and dashed lines indicate decay rates
          $(\#\Mesh_N^*)^{-0.5}$ and $(\#\Mesh_N^*)^{-1}$, with $*\in
          \{D, C\}$.} 
	\label{F:locking}
\end{figure}

\subsection{Inhomogeneous continuity equation}
\label{SS:numerics-inhomogeneous}

We finally point out that the quasi-optimality and pressure robustness
of the new $\Poly{2}/\Poly{0}$ discretization, as stated in
Theorem~\ref{T:Pl-Pl-2-velocity-error}, hinges on the homogeneity of
the continuity equation in the Stokes problem~\eqref{Stokes-weak},
cf. section~\ref{SS:inhomogeneous-continuity}.  

\begin{figure}[htp]
	{\includegraphics[width=0.6\hsize]{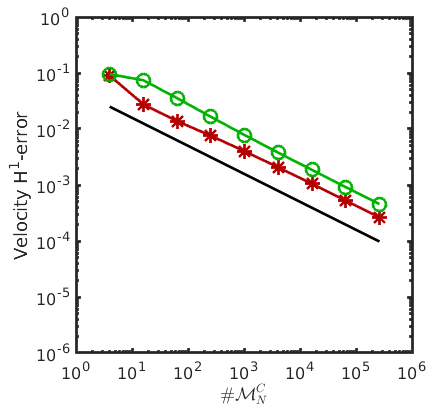}}
	\caption{Test case \S\ref{SS:numerics-inhomogeneous}. Velocity
          $H^1$-error of standard ($*$) and new ($\circ$)
          $\Poly{2}/\Poly{0}$ discretizations. Plain line indicates
          decay rate $(\#\Mesh_N^C)^{-0.5}$.}  
	\label{F:inhomogeneous}
\end{figure}

To see this, we consider the more general problem~\eqref{Stokes-inhom}
and approximate the analytical solution 
\begin{equation*}
u(x_1, x_2) = \left( 
\begin{tabular}{c}  
$x_1(1-x_1)x_2(1-x_2)$\\[2pt] $x_1(1-x_1)x_2(1-x_2)$
\end{tabular} \right) 
\qquad
p(x_1, x_2) = (x_1 - 0.5)(x_2-0.5)
\end{equation*}
on the crisscross meshes $\Mesh_N^C$ with $N=0,\dots,8$. Note, in
particular, that $\Div u$ is not element-wise constant on $\Mesh_N^C$.  

Comparing the velocity $H^1$-errors of the standard
$\Poly{2}/\Poly{0}$ discretization~\eqref{Stokes-Pl-Pl-2-standard} and
the new one~\eqref{Stokes-Pl-Pl-2}, we see that the former is slightly
smaller than the latter and that both errors converge to zero with
decay rate $(\Mesh_N^C)^{-0.5}$;
cf. Figure~\ref{F:inhomogeneous}. This confirms that
inequality~\eqref{Pl-Pl-2-velocity-error-inhom} captures the correct
behavior of the new discretization. Thus, for this problem, we expect
that the new discretization performs significantly better than the
standard one only in case of large pressure $L^2$-errors.

\subsection*{Acknowledgements}
We wish to thank Rüdiger Verfürth for reading some preliminary
versions of this manuscript and for suggesting several improvements in
the presentation. 

\subsection*{Funding}
The authors gratefully acknowledge
partial support by the DFG research grant KR 3984/5-1 ``Convergence
Analysis for Adaptive Discontinuous Galerkin Methods''.

\end{document}